% This is a sample LaTeX file for a JOTA paper. A sample figure file (Fig_1.pdf) is required to typeset this file.
%A standard way of writing LaTeX files is to give everything a label: sections, formulas, figures, references, etc. Labeling makes it easy to modify a LaTeX file, but it is often difficult to create and remember the labels. Labeling is not used in this example.

%referee
\documentclass[smallextended,envcountsect]{svjour3} 
% The option smallextended is the standard JOTA format. 
% The option referee  makes the paper double-spaced.
% The option envcountsect numbers theorems, etc, by section.
% svjour3 is the document class for Springer journals.  
\smartqed 
%This command right justifies \qed throughout the paper. 
\usepackage{graphicx}
%This package is used to insert figures.
\journalname{JOTA}

\usepackage{amssymb}
\usepackage{amsmath}
\usepackage{algorithm}
\usepackage[noend]{algpseudocode}
\usepackage{longfbox}
\usepackage{hyperref}
\fboxset{padding-top=-10pt,padding-bottom=0pt}%
\floatstyle{plaintop}
\restylefloat{algorithm}
\def\LFT{[}
\def\RGH{]}

\def\ba{\begin{array}}
	\def\ea{\end{array}}
\def\la{\langle}
\def\ra{\rangle}

\def\E{\mathbb{E}}
\def\argmin{\mathop{\rm argmin}}
\def\Argmin{\mathop{\rm Argmin}}
\def\H{\mathcal{H}}
\def\Hf{\mathcal{H}_{\!f}}
\def\Hfnu{\mathcal{H}_{\!f}(\nu)}
\def\sigmaf{\sigma_{\!f}}

\def\sigmafp{\sigma_{\!f}(p)}

\def\sigmafnu{\sigma_{\!f}(2 + \nu)}

\def\gammafnu{\gamma_{f}(\nu)}
\def\S{\mathcal{S}}

\newcommand{\refLE}[1]{\ensuremath{\stackrel{(\ref{#1})}{\leq}}}
\newcommand{\refEQ}[1]{\ensuremath{\stackrel{(\ref{#1})}{=}}}

\newcommand{\defeq}{:=} % for definitions
 % trace
 % diagonal operator
\newcommand{\dom}{\operatorname{dom}} % domain

\def\BT{\begin{theorem}}
	\def\ET{\end{theorem}}
\def\BL{\begin{lemma}}
	\def\EL{\end{lemma}}
\def\BC{\begin{corollary}}
	\def\EC{\end{corollary}}
\def\BE{\begin{example}}
	\def\EE{\end{example}}
\def\BD{\begin{definition}}
	\def\ED{\end{definition}}
\def\BR{\begin{remark}}
	\def\ER{\end{remark}}
\def\BAS{\begin{assumption}}
	\def\EAS{\end{assumption}}
\def\BI{\begin{itemize}}
	\def\EI{\end{itemize}}

\newcommand{\half}{\mbox{${1 \over 2}$}}

\def\beq{\begin{equation}}
\def\eeq{\end{equation}}

\begin{document}

\title{Minimizing Uniformly Convex Functions by \\
	   Cubic Regularization of Newton Method\thanks{Communicated by Lionel Thibault.}}

%\subtitle{}

\author{Nikita Doikov  \and  Yurii Nesterov }

\institute{Nikita Doikov,  Corresponding author  \at
             ICTEAM (Catholic University of Louvain), Louvain-la-Neuve, Belgium\\
              Nikita.Doikov@uclouvain.be. 
              ORCID: 0000-0003-1141-1625.
           \and
           Yurii Nesterov    \at
              CORE (Catholic University of Louvain),
              Louvain-la-Neuve, Belgium\\
              Yurii.Nesterov@uclouvain.be. 
              ORCID: 0000-0002-0542-8757.
}

\titlerunning{Minimizing Uniformly Convex Functions by Cubic Newton}

\date{Received: 16 July 2019 / Accepted: 15 February 2021
/ Published online: 10 March 2021
\;\textcopyright\,\!\!  The Author(s) 2021}
%The correct dates will be entered by the editor.

\maketitle

\begin{abstract}
In this paper, we study the iteration complexity of cubic
regularization of Newton method for solving composite
minimization problems with uniformly convex objective. We
introduce the notion of second-order condition number of a
certain degree and justify the linear rate of convergence
in a nondegenerate case for the method with an adaptive
estimate of the regularization parameter. The algorithm
automatically achieves the best possible global complexity
bound among different problem classes of uniformly convex
objective functions with H\"older continuous Hessian of
the smooth part of the objective. As a byproduct of our
developments, we justify an intuitively plausible result
that the global iteration complexity of the Newton method
is always better than that of the gradient method on the
class of strongly convex functions with uniformly bounded
second derivative.
\end{abstract}
\keywords{Newton method \and cubic regularization \and 
	global complexity bounds \and strong convexity \and
	uniform convexity}
\subclass{49M15 \and  49M37 \and 58C15 \and 90C25 \and 90C30}

%All acknowledgements should be placed in the back of the paper after Conclusions..

%%%%%%%%%%%%%%%%%%%%%%%%%%%%%%%%%
%%%%%%%%%%%%%%%%%%%%%%%%%%%%%%%%%
%%%%%%%%%%%%%%%%%%%%%%%%%%%%%%%%%
\section{Introduction} \label{SectionIntroduction}
%%%%%%%%%%%%%%%%%%%%%%%%%%%%%%%%%
%%%%%%%%%%%%%%%%%%%%%%%%%%%%%%%%%
%%%%%%%%%%%%%%%%%%%%%%%%%%%%%%%%%
A big step in a second-order optimization theory is
related to the global complexity guarantees which were
justified in \cite{nesterov2006cubic} for the cubic
regularization of the Newton method. The following results
provide a good perspective for the development of this
approach, discovering
accelerated~\cite{nesterov2008accelerating},
adaptive~\cite{cartis2011adaptive1,cartis2011adaptive2}
and universal~\cite{grapiglia2017regularized} schemes. The
latter methods can automatically adjust to a smoothness
properties of the particular objective function. In the
same vein, the second-order algorithms for solving a
system of nonlinear equations were discovered
in~\cite{nesterov2007modified}, and randomized variants
for solving large-scale optimization problems were
proposed in~
\cite{cartis2018global,doikov2018randomized,ghadimi2017second,kohler2017sub,tripuraneni2018stochastic}.

Despite to a number of nice properties, global complexity
bounds of the cubically regularized Newton method for the
cases of strongly convex and uniformly convex objective
are not still fully investigated, as well as the notion of
second-order non-degeneracy (see discussion in Sect.
5 in \cite{nesterov2008accelerating}). We are going to
address this issue in the current paper.

The rest of the paper is organized as follows.
Sect. \ref{SectionUniformConvexity} contains all
necessary definitions and main properties of the classes
of uniformly convex functions and twice-differentiable
functions with H\"older continuous Hessian. We introduce
the notion of the \textit{condition number} $\gammafnu$ of
a certain degree $\nu \in [0, 1]$ and present some basic
examples.

In Sect. \ref{SectionRegularizedNewton}, we describe a general
regularized Newton scheme and show the linear rate of convergence for
this method on the class of uniformly convex functions with a known
degree $\nu \in [0, 1]$ of nondegeneracy. Then, we introduce the
adaptive cubically regularized Newton method and collect useful
inequalities and properties, which are related to this algorithm.

In Sect. \ref{SectionHolderHessianComplexity}, we study global
iteration complexity of the cubically regularized Newton method on
the classes of uniformly convex functions with H\"older continuous
Hessian. We show that for nondegeneracy of \textit{any} degree
$\nu \in [0, 1]$, which is formalized by the
condition $\gammafnu > 0$, the algorithm automatically achieves the
linear rate of convergence with the value $\gammafnu$ being the main
complexity factor.

Finally, in Sect. \ref{SectionDiscussion} we
compare our complexity bounds with the known bounds for
other methods and discuss the results. In particular, we
justify an intuitively plausible (but quite a delayed)
result that the global complexity of the
cubically regularized Newton method is
always better than that of the gradient method on the
class of strongly convex functions with uniformly bounded
second derivative.

%%%%%%%%%%%%%%%%%%%%%%%%%%%%%%%%%
%%%%%%%%%%%%%%%%%%%%%%%%%%%%%%%%%
%%%%%%%%%%%%%%%%%%%%%%%%%%%%%%%%%
\section{Uniformly Convex Functions with H\"older Continuous Hessian}
\label{SectionUniformConvexity}
%%%%%%%%%%%%%%%%%%%%%%%%%%%%%%%%%
%%%%%%%%%%%%%%%%%%%%%%%%%%%%%%%%%
%%%%%%%%%%%%%%%%%%%%%%%%%%%%%%%%%

Let us start from some notation. In what follows, we denote
by $\E$ a finite-dimensional real vector space and by
$\E^{*}$ its dual space, which is a space of linear
functions on $\E$. The value of function $s \in \E^{*}$ at
point $x \in \E$ is denoted by $\langle s, x \rangle$.
Let us fix some linear self-adjoint positive-definite operator
$B: \E \to \E^{*}$ and introduce the following Euclidean norms in the
primal and dual spaces:
$$
\ba{rcl}
\|x\| & \defeq &  \langle Bx, x \rangle^{1/2}, \quad x \in \E,
\qquad \|s\|_{*} \; \defeq \; \langle s, B^{-1}s \rangle^{1/2},
\quad s \in \E^{*}.
\ea
$$
For any linear operator $A: \E \to \E^{*}$, its norm is induced in a
standard way:
$$
\ba{rcl}
\|A\| & \defeq & \max\limits_{x \in \E} \bigl\{ \|Ax\|_{*} \; | \;
\|x\| \leq 1 \bigr\}.
\ea
$$
Our goal is to solve the convex optimization problem
in the composite form:
\beq \label{MainProblem}
\ba{rcl}
\min\limits_{x \in \dom F} F(x) & := & f(x) + h(x),
\ea
\eeq
where $f$ is a twice differentiable on its open domain
uniformly convex function, and $h$ is a \textit{simple}
closed convex function with $\dom h \subseteq \dom f$.
\textit{Simple} means that all auxiliary subproblems with
an explicit presence of $h$ are easily solvable.

For a smooth function $f$, its gradient at point $x$ is
denoted by $\nabla f(x) \in \E^{*}$,  and its Hessian is
denoted by $\nabla^2 f(x) : \E \to \E^{*}$. For convex but
not necessary differentiable function $h$, we denote by
$\partial h(x) \subset \E^{*}$ its subdifferential at the
point $x \in \dom h$.

We say that differentiable function $f$ is \textit{uniformly convex}
of degree $p \geq 2$ on a convex set $C \subseteq \dom f$
if for some constant $\sigma > 0$ it satisfies inequality
\beq \label{UniformlyConvexDefinition}
\ba{rcl}
f(y) & \; \geq \; & f(x) + \langle \nabla f(x), y - x \rangle +
\frac{\sigma}{p}\|y - x\|^p, \qquad x, y \in C.
\ea
\eeq
Uniformly convex functions of degree $p = 2$ are known as
\textit{strongly convex}. If
inequality~\eqref{UniformlyConvexDefinition} holds with
$\sigma = 0$, the function $f$ is called just \textit{convex}.
The following convenient condition
is sufficient for function $f$ to be uniformly convex on a convex set
$C \subseteq \dom f$:
\begin{lemma}(Lemma 1 in~\cite{nesterov2008accelerating})
	Let for some $\sigma > 0$ and $p \geq 2$
	the following inequality holds:
	\beq \label{UniformlyConvexSufficient}
	\ba{rcl}
	\langle \nabla f(x) - \nabla f(y), x - y \rangle & \; \geq \; &
	\sigma \|x - y\|^p, \qquad x, y \in C.
	\ea
	\eeq
	Then, function $f$ is uniformly convex of degree $p$ on set $C$ with
	parameter $\sigma$.
\end{lemma}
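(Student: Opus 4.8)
The plan is to integrate the monotonicity-type inequality~\eqref{UniformlyConvexSufficient} along the line segment joining $x$ and $y$. First I would fix $x, y \in C$ and, using convexity of $C$, note that the whole segment $x_t \defeq x + t(y - x)$, $t \in [0, 1]$, lies in $C \subseteq \dom f$; hence $f$ is differentiable at every $x_t$ and the scalar function $\phi(t) \defeq f(x_t)$ is well defined and differentiable on $[0, 1]$ with $\phi'(t) = \langle \nabla f(x_t), y - x \rangle$. This lets me write the exact first-order remainder as
\[
f(y) - f(x) - \langle \nabla f(x), y - x \rangle \;=\; \int_0^1 \bigl( \phi'(t) - \phi'(0) \bigr)\, dt \;=\; \int_0^1 \langle \nabla f(x_t) - \nabla f(x), y - x \rangle\, dt.
\]

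Next I would bound the integrand from below. For $t \in (0, 1]$ I would use the identity $y - x = \tfrac{1}{t}(x_t - x)$ to rewrite $\langle \nabla f(x_t) - \nabla f(x), y - x \rangle = \tfrac{1}{t}\langle \nabla f(x_t) - \nabla f(x), x_t - x \rangle$, then apply~\eqref{UniformlyConvexSufficient} to the pair $(x_t, x) \in C \times C$ together with $\|x_t - x\| = t\|y - x\|$, obtaining the lower bound $\sigma t^{p-1}\|y - x\|^p$ (this also holds trivially at $t = 0$, since $p \geq 2$). Finally I would integrate this estimate over $[0, 1]$, which gives $\int_0^1 \sigma t^{p-1}\|y - x\|^p\, dt = \tfrac{\sigma}{p}\|y - x\|^p$, and this is precisely inequality~\eqref{UniformlyConvexDefinition}.

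There is no substantial obstacle here; the argument is a short one-line integration. The only points deserving a word of care are that the segment stays inside $C$ (guaranteed by convexity of $C$), so that~\eqref{UniformlyConvexSufficient} is legitimately applicable to every pair $(x_t, x)$, and that the auxiliary factor $1/t$ introduced in the rewriting is harmless, since it is absorbed by $\|x_t - x\|^p = t^p \|y - x\|^p$ and the resulting integrand $\sigma t^{p-1}\|y - x\|^p$ is continuous and integrable on $[0, 1]$ for $p \geq 2$.
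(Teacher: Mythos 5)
Your argument is correct: writing the first-order remainder as $\int_0^1 \langle \nabla f(x_t) - \nabla f(x), y - x \rangle\, dt$ with $x_t = x + t(y-x)$, lower-bounding the integrand by $\sigma t^{p-1}\|y-x\|^p$ via \eqref{UniformlyConvexSufficient} applied to the pair $(x_t, x)$, and integrating to get $\tfrac{\sigma}{p}\|y-x\|^p$ is exactly the standard proof of this fact. The paper itself gives no proof and simply cites \cite{nesterov2008accelerating}; your integration argument is the one found there, and your attention to the convexity of $C$ (so that $x_t \in C$) and to the harmlessness of the $1/t$ factor is appropriate.
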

From now on, we assume
$
C \; := \; \dom F \; \subseteq \; \dom f.
$
By the composite representation~\eqref{MainProblem},
we have for every $x \in \dom F$ and for
all $F'(x) \in \partial F(x)$:
\begin{equation} \label{UniformlyConvexLowerBound}
\ba{rcl}
F(y) & \geq  & F(x) + \langle
F'(x), y - x \rangle + \frac{\sigma}{p}\|x - y\|^p, \qquad
y \in \dom F.
\ea
\end{equation}
Therefore, if $\sigma > 0$, then we can have only one
point $x^{*} \in \dom F$ with $F(x^{*}) = F^{*}$, which
always exists for $F$ being uniformly convex and closed.
A useful consequence of uniform convexity is the following
upper bound for the residual.
\begin{lemma} \label{UniformlyConvexConsequence}
	Let $f$ be uniformly convex of degree $p \geq 2$ with
	constant $\sigma > 0$ on set $\dom F$. Then, for every $x
	\in \dom F$ and for all $F'(x) \in \partial F(x)$ we have
	\begin{equation} \label{UniformlyConvexGradientBound}
	\ba{rcl}
	F(x) - F^{*} & \; \leq \; & \frac{p - 1}{p} \left( \frac{1}{\sigma}
	\right)^{\frac{1}{p - 1}} \|F'(x)\|_{*}^{\frac{p}{p - 1}}.
	\ea
	\end{equation}
\end{lemma}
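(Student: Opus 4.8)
The plan is to start from the uniform convexity lower bound \eqref{UniformlyConvexLowerBound} applied at the current point $x$ and the optimal point, and then optimize over the relevant directions. Concretely, fix $x \in \dom F$ and $F'(x) \in \partial F(x)$. First I would write, for every $y \in \dom F$,
$$
F^{*} \;\leq\; F(y) \;\leq\; F(x) + \langle F'(x), y - x \rangle + \frac{\sigma}{p}\|x - y\|^{p},
$$
so that
$$
F(x) - F^{*} \;\leq\; \langle F'(x), x - y \rangle - \frac{\sigma}{p}\|x - y\|^{p} \;\leq\; \max_{z \in \E}\Bigl\{ \langle F'(x), z \rangle - \frac{\sigma}{p}\|z\|^{p} \Bigr\}.
$$
Wait --- one must be careful that the maximization over $z = x - y$ is over all of $\E$, not just over $\dom F - x$; since we are only using this as an upper bound (the inequality goes the right way when we enlarge the feasible set), this relaxation is legitimate and in fact is exactly what we want.

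The second step is to evaluate the unconstrained maximum on the right-hand side, which is a one-dimensional computation once we pass to the dual norm. By Cauchy--Schwarz $\langle F'(x), z \rangle \leq \|F'(x)\|_{*}\,\|z\|$, with equality attained for an appropriate choice of $z$ (take $z$ proportional to $B^{-1}F'(x)$), so the maximum equals
$$
\max_{t \geq 0}\Bigl\{ \|F'(x)\|_{*}\, t - \frac{\sigma}{p} t^{p} \Bigr\},
$$
where $t = \|z\|$. Differentiating, the optimal $t$ satisfies $\|F'(x)\|_{*} = \sigma t^{p-1}$, i.e. $t = \bigl(\|F'(x)\|_{*}/\sigma\bigr)^{1/(p-1)}$, and substituting back gives the value
$$
\|F'(x)\|_{*}\cdot t - \frac{\sigma}{p} t^{p} \;=\; t\Bigl(\|F'(x)\|_{*} - \frac{\sigma}{p} t^{p-1}\Bigr) \;=\; t\,\|F'(x)\|_{*}\Bigl(1 - \frac{1}{p}\Bigr) \;=\; \frac{p-1}{p}\Bigl(\frac{1}{\sigma}\Bigr)^{\frac{1}{p-1}} \|F'(x)\|_{*}^{\frac{p}{p-1}},
$$
which is exactly the claimed bound \eqref{UniformlyConvexGradientBound}.

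I do not expect any serious obstacle here: the only point requiring a moment's care is the one flagged above, namely that replacing the constrained maximum over $y \in \dom F$ by the unconstrained maximum over $z \in \E$ preserves the direction of the inequality (it can only increase the right-hand side), so the resulting estimate remains valid even though the maximizing $z$ need not correspond to a feasible $y$. Everything else is the standard Fenchel-type computation that the convex conjugate of $z \mapsto \frac{\sigma}{p}\|z\|^{p}$ is $s \mapsto \frac{p-1}{p}\sigma^{-1/(p-1)}\|s\|_{*}^{p/(p-1)}$, specialized to $s = F'(x)$.
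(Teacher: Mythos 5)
Your overall route is exactly the paper's: reduce to maximizing $\langle F'(x), z\rangle - \frac{\sigma}{p}\|z\|^{p}$ over all of $\E$ (the relaxation from $\dom F - x$ to $\E$ goes the right way, as you note), and evaluate this conjugate in closed form; the final computation is correct and matches \eqref{UniformlyConvexGradientBound}.

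However, your first display is written with the wrong inequality sign: uniform convexity \eqref{UniformlyConvexLowerBound} gives $F(y) \ \geq\ F(x) + \langle F'(x), y-x\rangle + \frac{\sigma}{p}\|x-y\|^{p}$, not $\leq$. As a consequence, the intermediate claim that $F(x) - F^{*} \leq \langle F'(x), x-y\rangle - \frac{\sigma}{p}\|x-y\|^{p}$ holds \emph{for every} $y \in \dom F$ is false (take $y=x$: it would assert $F(x)\leq F^{*}$). The fix is immediate and restores precisely the paper's argument: either specialize to $y = x^{*}$, where $F(y)=F^{*}$ and the (correctly oriented) lower bound rearranges to the desired intermediate inequality, or, equivalently, minimize both sides of \eqref{UniformlyConvexLowerBound} over $y$ independently, which yields $F^{*} \geq F(x) + \min_{y\in\E}\{\langle F'(x), y-x\rangle + \frac{\sigma}{p}\|x-y\|^{p}\}$ and then your conjugate computation finishes the proof unchanged.
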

\begin{proof}
	\noindent
	In view of~\eqref{UniformlyConvexLowerBound}, 
	bound \eqref{UniformlyConvexGradientBound}
	follows as in the proof of Lemma~3 
	in~\cite{nesterov2008accelerating}.
\qed
\end{proof}

It is reasonable to define the best possible constant~$\sigma$ in
inequality~\eqref{UniformlyConvexSufficient} for a certain degree~$p$.
This leads us to a system of constants:
\begin{equation} \label{SigmaDefinition}
\ba{rcl}
\sigmafp & \; \defeq \; & \inf\limits_{\substack{x, y \, \in \, \dom F \\
		x \not= y }} \frac{\langle \nabla f(x) - \nabla f(y), x - y \rangle }
{\|x - y\|^p}, \qquad p \geq 2.
\ea
\end{equation}
We prefer to use
inequality~\eqref{UniformlyConvexSufficient} for the
definition of $\sigmafp$, instead
of~\eqref{UniformlyConvexDefinition}, because of its
symmetry in $x$ and $y$. Note that the value $\sigmafp$
also depends on the domain of $F$. However, we omit this
dependence in our notation since it is always clear from
the context.

It is easy to see that the
univariate function $\sigma_{\!f}( \cdot )$ is log-concave.
Thus, for all $p_2 > p_1 \geq 2$ we have:
\begin{equation} \label{SigmaLogConcave}
\ba{rcl}
\sigma_{\!f}(p) & \; \geq \; &
\bigl( \sigma_{\!f}(p_1) \bigr)^{\frac{p_2 - p}{p_2 - p_1}} \cdot
\bigl( \sigma_{\!f}(p_2) \bigr)^{\frac{p - p_1}{p_2 - p_1}},
\qquad p \in [p_1, p_2].
\ea
\end{equation}

For a twice-differentiable function $f$, we say
that it has \textit{H\"older continuous Hessian} of degree
$\nu \in [0, 1]$ on a convex set $C \subseteq \dom f$, if for some
constant $\mathcal{H}$, it
holds:
\begin{equation} \label{HolderHessianDefinition}
\ba{rcl}
\| \nabla^2 f(x) - \nabla^2 f(y) \| & \leq &
\mathcal{H} \|x - y\|^{\nu}, \qquad x, y \in C.
\ea
\end{equation}
Two simple consequences of~\eqref{HolderHessianDefinition}
are as follows:
\beq \label{HolderHessianGradBound}
\ba{rcl}
\| \nabla f(y) - \nabla f(x) - \nabla^2 f(x)(y - x) \|_{*} 
& \leq &
 \frac{\H \|x - y\|^{1 + \nu}}{1 + \nu},
\ea
\eeq
\beq \label{HolderHessianFuncBound}
\ba{rcl}
| f(y) - Q(x; y) |  & \leq & 
\frac{\H \|x - y\|^{2 + \nu}}{(1 + \nu)(2 + \nu)},
\ea
\eeq
where $Q(x; y)$ is the quadratic model of $f$ at the point
$x$:
$$
\ba{rcl}
Q(x; y) & \; \defeq \; & f(x) + \langle \nabla f(x), y - x \rangle +
\frac{1}{2} \langle \nabla^2 f(x) (y - x), y - x \rangle.
\ea
$$

In order to characterize the level of smoothness of
function $f$ on the set $C := \dom F$, let us define
the system of H\"older constants (see
\cite{grapiglia2017regularized}):
\begin{equation} \label{HfnuDefinition}
\ba{rcl}
\Hfnu & \; \defeq &\; \sup\limits_{\substack{x, y \in \dom F \\ x \not= y}}
\frac{\| \nabla^2 f(x) - \nabla^2 f(y) \|}{\|x - y\|^{\nu}},
\qquad \nu \in [0, 1].
\ea
\end{equation}
We allow $\Hfnu$ to be equal to $+\infty$ for some $\nu$.
Note that function $\H_{\!f}( \cdot )$ is log-convex.
Thus, any $0 \leq \nu_1 < \nu_2 \leq 1$ such that
$\H_{\!f}(\nu_i) < +\infty, i = 1,2$, provide us with the
following upper bounds for the whole interval:
\begin{equation} \label{HLogConvex}
\ba{rcl}
\H_{\!f}(\nu) & \; \leq \; &
\bigl( \H_{\!f}(\nu_1) \bigr)^{\frac{\nu_2 - \nu}{\nu_2 - \nu_1}} \cdot
\bigl( \H_{\!f}(\nu_2) \bigr)^{\frac{\nu - \nu_1}{\nu_2 - \nu_1}},
\qquad \nu \in [\nu_1, \nu_2].
\ea
\end{equation}
If for some specific $\nu \in [0, 1]$ we have $\Hfnu = 0$,
this implies that $\nabla^2 f(x) = \nabla^2 f(y)$ for all
$x, y \in \dom F$. In this case restriction
$\left.f\right|_{\dom F}$ is a quadratic function and we
conclude that $\Hfnu = 0$ for \textit{all} $\nu \in [0,
1]$. At the same time, having two points $x, y \in \dom F$
with $0 < \|x - y\| \leq 1$, we get a simple uniform lower
bound for all constants $\Hfnu$:
$$
\ba{rcl}
\Hfnu & \; \geq \; & \| \nabla^2 f(x) - \nabla^2 f(y) \|,
\qquad \nu \in [0, 1].
\ea
$$

Let us give an example of function, which has H\"older
continuous Hessian for all $\nu \in [0, 1]$.
\begin{example}
	For a given  $a_i \in \E^{*}$, $1 \leq i \leq m$, consider the
	following convex function:
	$$
	\ba{rcl}
	f(x) & \; = \; & \ln \left( \sum_{i = 1}^m e^{\langle a_i,  x
		\rangle} \right), \quad x \in \E.
	\ea
	$$
	Let us fix Euclidean norm $\|x\| = \langle Bx, x
	\rangle^{1/2}, x \in \E$, with operator $B := \sum_{i
		= 1}^m a_i a_i^{*}$. Without loss of generality, we assume
	that $B \succ 0$ (otherwise we can reduce dimension of the
	problem). Then,
	$$
	\ba{rcl}
	\Hf(0) & \; \leq \; & 1, \quad  \Hf(1) \;\, \leq \,\; 2.
	\ea
	$$
	Therefore, by~\eqref{HLogConvex} we get, for any $\nu \in [0, 1]$:
	$$
	\ba{rcl}
	\Hfnu & \; \leq \;& 2^{\nu}.
	\ea
	$$
\end{example}
\begin{proof}
	Denote $\kappa(x) \equiv \sum_{i = 1}^m e^{\langle a_i, x \rangle}$.
	Let us fix arbitrary $x, y \in \E$ and direction $h \in \E$. Then,
	straightforward computation gives:
	$$
	\ba{rcl}
	\langle \nabla f(x), h \rangle &\; = \;& \frac{1}{\kappa(x)}
	\sum_{i = 1}^m e^{\langle a_i, x \rangle} \langle a_i, h \rangle, \\[6pt]
	\langle \nabla^2 f(x)h, h \rangle &\; =\;&
	 \frac{1}{\kappa(x)} \sum_{i = 1}^m e^{\langle a_i, x \rangle}
	\langle
	a_i, h \rangle^2 - \bigl( \frac{1}{\kappa(x)} \sum_{i = 1}^m
	e^{\langle a_i, x \rangle} \langle a_i, h \rangle  \bigr)^2 \\[5pt]
	&\; = \;& \frac{1}{\kappa(x)} \sum_{i = 1}^m e^{\langle a_i, x \rangle}
	\left( \langle
	a_i, h \rangle - \langle \nabla f(x), h \rangle
	\right)^2 \; \geq \; 0.
	\ea
	$$
	Hence, we get
	$$
	\ba{rcl}
	\| \nabla^2 f(x) \| & = & \max\limits_{\|h\| \leq 1} \langle \nabla^2 f(x)
	h, h \rangle \;
	\leq \;  \max\limits_{\|h\| \leq 1} \sum_{i = 1}^m \langle a_i, h \rangle^2
	\;
	= \; \max\limits_{\|h\| \leq 1} \|h\|^2 \; = \; 1.
	\ea
	$$
	Since all Hessians of function $f$ are positive definite,
	we conclude that $\Hf(0) \leq 1$. Inequality $\Hf(1) \leq
	2$ can be easily obtained from the following
	representation of the third derivative:
	$$
	\ba{rcl}
	f'''(x)[h,h,h]  & \; = \; & {1 \over \kappa(x)}
	\sum\limits_{i=1}^m e^{\la a_i, x \ra} \left( \la a_i, h
	\ra - \la \nabla f(x), h \ra \right)^3\\
	\\
	& \;  \leq \; & \la \nabla^2 f(x) h, h \ra \max\limits_{1 \leq
		i,j \leq m } \la a_i - a_j , h \ra \; \leq \; 2 \| h \|^3.
	\ea
	$$
\qed
\end{proof}

Let us imagine now that we want to describe the iteration
complexity of some method, which solves the composite
optimization problem~\eqref{MainProblem} up to an absolute
accuracy $\epsilon > 0$ in the function value. We assume
that the smooth part $f$ of its objective is uniformly
convex and has H\"older continuous Hessians. Which degrees
$p$ and $\nu$ should be used in our analysis? Suppose
that, for the number of \textit{calls of the oracle}, we
are interested in obtaining  a polynomial-time bound of
the form:
$$
\ba{c}
O\left((\Hfnu)^{\alpha} \cdot (\sigmafp)^{\beta} \cdot
\log \frac{F(x_0) - F^{*} \,}{\varepsilon}\right), \quad
\alpha,\beta \neq 0.
\ea
$$
Denote by $\LFT x \RGH$ the \textit{physical dimension} of
variable $x \in\E$, and by $\LFT f \RGH$ the
\textit{physical dimension} of the value $f(x)$. Then, we
have $\LFT \nabla f(x) \RGH = \LFT f \RGH / \LFT x \RGH$
and $\LFT \nabla^2f(x) \RGH = \LFT f \RGH / \LFT x
\RGH^2$. This gives us
\begin{displaymath}
\ba{c}
\LFT \Hfnu \RGH \; = \; \frac{ \LFT f \RGH }{ \LFT x \RGH^{2 + \nu} },
\quad \LFT \sigmafp \RGH = \frac{\LFT f \RGH}{\LFT x \RGH^p},
\quad \LFT \, (\Hfnu)^{\alpha} \cdot (\sigmafp)^{\beta} \, \RGH =
\frac{\LFT f \RGH^{\alpha + \beta}}{ \LFT x \RGH^{\alpha(2 + \nu) + \beta p} }.
\ea
\end{displaymath}
While $x$ and $f(x)$ can be measured in arbitrary physical
quantities, the value "number of iterations" {\em cannot
	have} physical dimension. This leads to the following
relations:
\begin{displaymath}
\alpha + \beta = 0 \qquad \text{and} \qquad \alpha (2 + \nu) + \beta p = 0.
\end{displaymath}
Therefore, despite to the fact that our function can belong to several
problem classes simultaneously, from the physical point of view only
one option is available:
\begin{displaymath}
\boxed{p = 2 + \nu}
\end{displaymath}

Hence, for a twice-differentiable convex function $f$ with
$\inf_{\nu \in [0, 1]} \Hfnu > 0$, we can define only one
meaningful \textit{condition number} of degree $\nu \in
[0, 1]$:
\begin{equation} \label{ConditionNumberDefinition}
\ba{rcl}
\gamma_f(\nu) &\; \defeq \;& \frac{\sigma_f(2 + \nu)}{\Hfnu}.
\ea
\end{equation}
If for some particular $\nu$ we have $\Hfnu = +\infty$, then by our
definition: $\gamma_f(\nu) = 0$.

It will be shown that the condition number $\gamma_f(\nu)$
serves as a main factor in the global iteration complexity
bounds for the regularized Newton method as applied to the
problem \eqref{MainProblem}. Let us prove that this number
cannot be big.

\begin{lemma}
	Let $\inf_{\nu \in [0, 1]} \Hfnu > 0$ and therefore the condition
	number $\gamma_f(\cdot)$ be well defined. Then,
	\begin{equation} \label{GammaNuUpperBound}
	\ba{rcl}
	\gamma_f(\nu) & \quad \leq \quad & \frac{1}{1 + \nu} \;\; + \;
	\inf\limits_{x, y \in \dom F} \frac{\| \nabla^2 f(x) \|}{\| \nabla ^2 f(y) -
		\nabla^2 f(x) \|}, \qquad \nu \in [0, 1].
	\ea
	\end{equation}
	In the case when $\dom F$ is unbounded: $\sup_{x \in \dom
		F} \|x\| = +\infty$, then,
	\begin{equation} \label{GammaNuUpperBound2}
	\ba{rcl}
	\gamma_f(\nu) & \quad \leq \quad & \frac{1}{1 + \nu}, \qquad \nu \in (0, 1].
	\ea
	\end{equation}
\end{lemma}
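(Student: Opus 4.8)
The plan is to reduce both bounds to a single ``master inequality'' that plays the uniform--convexity constant $\sigmafnu$ against the H\"older constant $\Hfnu$ along a segment. Fix any $x \in \dom F$, a unit direction $h$ (i.e. $\|h\| = 1$), and $r > 0$ with $x + rh \in \dom F$; since $\dom F$ is convex, the whole segment $[x, x+rh]$ lies in $\dom F \subseteq \dom f$, and because $\dom f$ is open the map $t \mapsto \nabla f(x + th)$ is $C^1$ on $[0,r]$. I would write
$$
\langle \nabla f(x+rh) - \nabla f(x),\, rh \rangle \;=\; r^2 \int_0^1 \langle \nabla^2 f(x+\tau r h)h,\, h\rangle\, d\tau ,
$$
bound the left side from below by $\sigmafnu\, r^{2+\nu}$ via~\eqref{SigmaDefinition} to get $\int_0^1 \langle \nabla^2 f(x+\tau rh)h,h\rangle\, d\tau \geq \sigmafnu\, r^{\nu}$, and bound the integrand from above using~\eqref{HolderHessianDefinition} by $\langle \nabla^2 f(x)h,h\rangle + \Hfnu (\tau r)^{\nu}$. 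Integrating in $\tau$ and combining yields the master inequality
$$
\Bigl( \sigmafnu - \tfrac{\Hfnu}{1+\nu} \Bigr) r^{\nu} \;\leq\; \langle \nabla^2 f(x)h,\,h\rangle \;\leq\; \|\nabla^2 f(x)\| ,
$$
valid for every admissible triple $(x,h,r)$.

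For~\eqref{GammaNuUpperBound2}, take $\nu \in (0,1]$, fix any $x_0 \in \dom F$, and use $\sup_{x\in\dom F}\|x\| = +\infty$ to select $y_k \in \dom F$ with $r_k \defeq \|y_k - x_0\| \to +\infty$. Applying the master inequality with $x = x_0$, $h = (y_k - x_0)/r_k$, $r = r_k$ gives $\bigl(\sigmafnu - \Hfnu/(1+\nu)\bigr) r_k^{\nu} \leq \|\nabla^2 f(x_0)\|$ for all $k$; since $r_k^{\nu} \to +\infty$ while the right side is a fixed finite number, the coefficient must be nonpositive, i.e. $\sigmafnu \leq \Hfnu/(1+\nu)$, which is~\eqref{GammaNuUpperBound2} after dividing by $\Hfnu > 0$.

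For~\eqref{GammaNuUpperBound} I would first dispose of the degenerate cases: if $\Hfnu = +\infty$ or $\sigmafnu = 0$, then $\gammafnu = 0$ and the right-hand side is at least $\tfrac{1}{1+\nu} > 0$; and if $\sigmafnu - \Hfnu/(1+\nu) \leq 0$, the claim follows at once. Otherwise, for arbitrary $x, y \in \dom F$ with $\nabla^2 f(x) \neq \nabla^2 f(y)$ (such a pair exists because $\Hfnu > 0$), apply the master inequality along $[x,y]$, so $r = \|y-x\|$ and $h = (y-x)/\|y-x\|$, and then lower-bound $r^{\nu} = \|y-x\|^{\nu} \geq \|\nabla^2 f(y) - \nabla^2 f(x)\| / \Hfnu$ — this is precisely~\eqref{HolderHessianDefinition} read in the convenient direction. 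The result is $\bigl(\sigmafnu - \Hfnu/(1+\nu)\bigr)\|\nabla^2 f(y) - \nabla^2 f(x)\|/\Hfnu \leq \|\nabla^2 f(x)\|$; dividing by $\Hfnu$ and rearranging gives $\gammafnu - \tfrac{1}{1+\nu} \leq \|\nabla^2 f(x)\| / \|\nabla^2 f(y) - \nabla^2 f(x)\|$, and taking the infimum over all such pairs produces~\eqref{GammaNuUpperBound}. I do not expect a genuine obstacle; the only points to watch are the validity of the integral representation (which rests on convexity of $\dom F$ and openness of $\dom f$) and the bookkeeping of the degenerate subcases, while the substantive observation is simply that H\"older continuity of the Hessian is a two-sided estimate and can be inserted into the uniform-convexity inequality either way.
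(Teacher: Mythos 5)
Your proof is correct and follows essentially the same route as the paper: your ``master inequality'' $\bigl(\sigmafnu - \tfrac{\Hfnu}{1+\nu}\bigr)r^{\nu} \leq \langle \nabla^2 f(x)h,h\rangle$ is exactly the paper's intermediate bound $\sigmafnu \leq \tfrac{\Hfnu}{1+\nu} + \|\nabla^2 f(x)\|\,\|y-x\|^{-\nu}$, which the paper obtains by splitting off the quadratic term and invoking~\eqref{HolderHessianGradBound} where you re-derive that remainder bound via the integral representation. The subsequent steps (substituting $\|y-x\|^{\nu} \geq \|\nabla^2 f(y)-\nabla^2 f(x)\|/\Hfnu$ for the first bound, letting $\|y\|\to\infty$ for the second) coincide with the paper's, and your extra care with the sign of the coefficient and the degenerate cases is sound.
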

\begin{proof}
	Indeed, for any $x, y \in \dom F$, $x \not= y$, we have:
	$$
	\ba{rcl}
	\sigma_f(2 + \nu) \quad &\overset{\eqref{SigmaDefinition}}{\leq}& \quad
	\frac{\la \nabla f(y) - \nabla f(x), y - x \ra }{\|y - x\|^{2 + \nu}} \\[8pt]
	\quad
	& = & \quad \frac{\la \nabla f(y) -
		\nabla f(x) - \nabla^2 f(x)(y - x), y - x \ra  }{\|y - x\|^{2 + \nu}}
	\; + \;
	\frac{\la \nabla^2 f(x)(y - x), y - x \ra}{\|y - x\|^{2 + \nu}} \\[8pt]
	\quad
	& \overset{\eqref{HolderHessianGradBound}}{\leq} & \quad
	\frac{\Hfnu}{1 + \nu} \; + \; \frac{\|\nabla^2 f(x) \|}{\|y - x\|^{\nu}}.
	\ea
	$$
	Now, dividing both sides of this inequality by $\Hfnu$,
	we get inequality \eqref{GammaNuUpperBound} from the
	definition of $\Hfnu$~\eqref{HfnuDefinition}. Inequality
	\eqref{GammaNuUpperBound2} can be obtained by taking the
	limit $\|y\| \to +\infty$.
\qed
\end{proof}

From inequalities~\eqref{SigmaLogConcave}
and~\eqref{HLogConvex}, we can get the following lower
bound:
$$
\ba{rcl}
\gamma_f(\nu) & \; \geq \; & \bigl(  \gamma_f(\nu_1)
\bigr)^{\frac{\nu_2 - \nu}{\nu_2 - \nu_1}} \cdot \bigl(
\gamma_f(\nu_2) \bigr)^{\frac{\nu - \nu_1}{\nu_2 -
		\nu_1}}, \qquad \nu \in [\nu_1, \nu_2],
\ea
$$
where $0 \leq \nu_1 < \nu_2 \leq 1$. However, it turns out
that in \textit{unbounded case} we can have a nonzero
condition number $\gammafnu$ only for a {\em single
	degree}.
\begin{lemma}~\label{LemmaOneNonzero}
	Let $\dom F$ be unbounded: $\sup_{x \in \dom F} \|x\| = +\infty$.
	Assume that
	for a fixed $\nu \in [0, 1]$ we have $\gamma_f(\nu) > 0$. Then,
	$$
	\gamma_f(\alpha) = 0 \quad \text{for all} \quad \alpha \in [0, 1]
	\setminus \{ \nu \}.
	$$
\end{lemma}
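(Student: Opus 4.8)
The plan is to isolate a single one-directional claim and derive the whole statement from it: \emph{if $\gamma_f(\nu) > 0$ and $\alpha \in (\nu, 1]$, then $\gamma_f(\alpha) = 0$.} Granting this, the case $\alpha > \nu$ is immediate. For $\alpha < \nu$ I argue by contradiction: if $\gamma_f(\alpha)$ were positive, the claim applied with $\alpha$ in the role of the nondegenerate degree and $\nu$ in the role of the larger one would force $\gamma_f(\nu) = 0$, contradicting the hypothesis; hence $\gamma_f(\alpha) = 0$ in this case as well.

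To prove the claim, first recall that $\gamma_f(\nu) > 0$ forces (by the convention following \eqref{ConditionNumberDefinition}) both $\sigma_f(2 + \nu) > 0$ and $\mathcal{H}_f(\nu) < +\infty$. Fix any $x \in \dom F$; since $\dom f$ is open the Hessian $\nabla^2 f(x)$ exists and has finite norm. For an arbitrary $y \in \dom F$ with $y \neq x$, I split
$$
\langle \nabla f(y) - \nabla f(x),\, y - x \rangle \;=\; \langle \nabla f(y) - \nabla f(x) - \nabla^2 f(x)(y - x),\, y - x \rangle \;+\; \langle \nabla^2 f(x)(y - x),\, y - x \rangle ,
$$
bound the first term by \eqref{HolderHessianGradBound} together with the Cauchy--Schwarz inequality, and bound the second by $\|\nabla^2 f(x)\|\,\|y - x\|^{2}$. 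Dividing by $\|y - x\|^{2 + \alpha}$ and invoking the definition \eqref{SigmaDefinition} of $\sigma_f(2 + \alpha)$ yields
$$
\sigma_f(2 + \alpha) \;\leq\; \frac{\mathcal{H}_f(\nu)}{(1 + \nu)\,\|y - x\|^{\alpha - \nu}} \;+\; \frac{\|\nabla^2 f(x)\|}{\|y - x\|^{\alpha}} .
$$
Because $\dom F$ is unbounded, I can pick $y = y_k \in \dom F$ with $\|y_k\| \to +\infty$, hence $\|y_k - x\| \to +\infty$; since $\alpha - \nu > 0$ and $\alpha > 0$, the right-hand side tends to $0$, so $\sigma_f(2 + \alpha) \leq 0$. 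But $\sigma_f(\cdot) \geq 0$ always, the numerator in \eqref{SigmaDefinition} being nonnegative by monotonicity of the gradient of a convex function. Therefore $\sigma_f(2 + \alpha) = 0$, and so $\gamma_f(\alpha) = 0$.

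The step that requires a little care is choosing the splitting of $\langle \nabla f(y) - \nabla f(x), y - x \rangle$ so that exactly one piece is governed by the Hölder bound of degree $\nu$ while the other contributes only the fixed quantity $\|\nabla^2 f(x)\|$ evaluated at a single anchor point $x$; this is what lets the large-$\|y - x\|$ limit kill both terms. Beyond that, the remaining points are pure bookkeeping: that only finiteness of $\mathcal{H}_f(\nu)$ and of $\|\nabla^2 f(x)\|$ is used (no positive lower bound on $\mathcal{H}_f(\nu)$ is needed), and that the exponents $\alpha - \nu$ and $\alpha$ are strictly positive, which holds because $\alpha > \nu \geq 0$. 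I do not anticipate a genuine obstacle; the only organizational subtlety is reducing the two cases $\alpha > \nu$ and $\alpha < \nu$ to the single one-directional claim.
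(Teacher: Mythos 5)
Your proof is correct and follows essentially the same route as the paper's: both cases are reduced to the one-directional claim for $\alpha > \nu$, which is proved by splitting off the quadratic term $\langle \nabla^2 f(x)(y-x), y-x\rangle$, bounding the remainder by the degree-$\nu$ H\"older condition, dividing by $\|y-x\|^{2+\alpha}$, and letting the points separate using unboundedness of $\dom F$. The only immaterial difference is that you argue at the gradient level via \eqref{HolderHessianGradBound} and \eqref{SigmaDefinition} while the paper argues with function values via \eqref{UniformlyConvexDefinition} and \eqref{HolderHessianFuncBound}; your choice to fix the anchor $x$ and send $\|y\|\to\infty$ is in fact a slightly cleaner formulation of the same limiting argument.
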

\begin{proof}
	Consider firstly the case: $\alpha > \nu$. From the
	condition $\gamma_f(\nu) > 0$, we conclude that $\Hf(\nu) <
	+\infty$. Then, for any $x, y \in \dom F$ we have:
	$$
	\ba{rcl}
	\frac{\sigmaf(2 + \alpha) \|y - x\|^{2 + \alpha}}{2 + \alpha} \quad
	& \overset{\eqref{UniformlyConvexDefinition}}{\leq} &  \quad f(y) -
	f(x) - \langle \nabla f(x), y - x \rangle \\[8pt]
	& \overset{\eqref{HolderHessianFuncBound}}{\leq} & \quad
	\frac{1}{2}\langle \nabla^2 f(x)(y - x), (y - x) \rangle +
	\frac{\Hf(\nu) \|y - x\|^{2 + \nu}}{(1 + \nu)(2 + \nu)}.
	\ea
	$$
	Dividing both sides of this inequality by $\|y - x\|^{2 +
		\alpha}$ and letting $\|x\| \to +\infty$, we get
	$\sigmaf(2 + \nu) = 0$. Therefore, $\gamma_f(\alpha) = 0$.
	For the second case, $\alpha < \nu$, we cannot have
	$\gamma_f(\alpha) > 0$, since the previous reasoning
	results in $\gamma_f(\nu) = 0$.
\qed
\end{proof}

Let us look now at an important example of a uniformly
convex function with H\"older continuous Hessian. It is
convenient to start with some properties of powers of
Euclidean norm.
\BL\label{lm-ENormP}
For fixed real $p\geq 1$, consider the following function:
$$
\ba{rcl}
f_p(x) & \; = \; & \frac{1}{p} \|x \|^{p}, \quad x \in \E.
\ea
$$
1. For $p \geq 2$, function $f_p(\cdot)$ is uniformly
convex of degree $\mbox{\rm $p$:}$\footnote{$^{\!\!\!\!)}$ For the
	integer values of $p$, this inequality was proved in
	\cite{nesterov2008accelerating}.}$^{\!)}$
\begin{equation} \label{PoweredNormUniformConvexity}
\langle \nabla f_p(x) - \nabla f_p(y), x - y \rangle \quad
\geq \quad 2^{2 - p} \|x - y\|^{p}, \quad x, y \in \E.
\end{equation}
2. If $1 \leq p \leq 2$, then function $f_p(\cdot)$ has
$\nu$-H\"older continuous gradient with $\nu = p-1$:
\beq\label{eq-HCond}
\| \nabla f_p(x) - \nabla f_p(y) \|_* \leq 2^{1-\nu} \| x
- y \|^{\nu}, \quad x, y \in \E.
\eeq
\EL
\begin{proof}
	Firstly, recall two useful inequalities, which are valid
	for all $a, b \geq 0$:
	\begin{align}
	|a^{\alpha} - b^{\alpha}| \; \leq \; |a - b|^{\alpha},
	\quad &\text{when} \quad 0 \leq \alpha \leq 1, \label{PoweredNormNuIneq} \\[5pt]
	|a^{\alpha} - b^{\alpha}| \; \geq \; |a - b|^{\alpha},
	\quad &\text{when} \quad \alpha \geq 1.  \label{PoweredNormPIneq}
	\end{align}
	
	Let us fix arbitrary $x, y \in \E$. The left-hand side of
	inequality (\ref{PoweredNormUniformConvexity}) equals
	$$
	\la \|x\|^{p - 2}Bx - \|y\|^{p - 2}By, x - y \ra \; = \;
	\|x\|^p + \|y\|^p - \la Bx, y \ra ( \|x\|^{p - 2} +
	\|y\|^{p - 2} ),
	$$
	and we need to verify that it is bigger than
	$
	2^{2 - p}\bigl[ \|x\|^2 + \|y\|^2 - 2 \la Bx, y \ra
	\bigr]^{\frac{p}{2}}.
	$
	The case $x = 0$ or $y = 0$ is trivial. Therefore, assume
	$x \not= 0$ and $y \not= 0$. Denoting $\tau :=
	\frac{\|y\|}{\|x\|}$, $r := \frac{\langle Bx,
		y\rangle}{\|x\| \cdot \|y\|}$, we have the
	following statement to prove:
	$$ 
	\ba{rcl}
	1 + \tau^p & \geq & r \tau(1 + \tau^{p - 2}) + 2^{2 - p}
	\bigl[  1 + \tau^2 - 2 r \tau  \bigr]^{\frac{p}{2}} ,
	\quad \tau > 0, \quad |r| \leq 1.
	\ea
	$$
	Since the function in the right-hand side is convex in
	$r$, we need to check only two marginal cases:
	\begin{enumerate}
		\item $r = 1 \, : \quad $
		$1 + \tau^{p} \; \geq \; \tau (1 + \tau^{p - 2}) + 2^{2 - p} |1 - \tau|^p$,
		which is equivalent to $(1 - \tau) (1 - \tau^{p - 1}) \geq 2^{2 - p}|1 - \tau|^p$.
		This is true by~\eqref{PoweredNormPIneq}.
		
		\item $r = -1\, : \quad $ $1 + \tau^{p} \; \geq \; -\tau(1 + \tau^{p - 2}) + 2^{2 - p}(1 + \tau)^p$,
		which is equivalent to $(1 + \tau^{p - 1}) \geq 2^{2 - p}(1 + \tau)^{p - 1} $.
		This is true in view of convexity of function $\tau^{p-1}$
		for $\tau \geq 0$.
	\end{enumerate}
	Thus, we have proved~\eqref{PoweredNormUniformConvexity}.
	Let us prove the second statement. Consider the function
	$\hat f_q(s) = {1 \over q} \| s \|^q_*$, $s \in \E^*$,
	with $q = {p \over p-1} \geq 2$. In view of our first
	statement, we have:
	\beq\label{eq-Dop1}
	\ba{rcl}
	\la s_1 - s_2, \nabla \hat f_q(s_1) - \nabla \hat f_q(s_2)
	\ra 
	& \geq & \left(\half\right)^{q-2} \| s_1 - s_2 \|_*^q,
	\quad s_1, s_2 \in \E^*.
	\ea
	\eeq
	For arbitrary $x_1, x_2 \in \E$, define $s_i = \nabla
	f_p(x_i) = {B x_i \over  \| x_i \|^{2-p}} $, $i = 1, 2$.
	Then $\| s_i \|_* = \| x_i \|^{p-1}$, and consequently,
	$$
	\ba{rcl}
	x_i & = & \| x_i \|^{2-p} B^{-1} s_i \; = \; \| s_i \|_{*}^{2-p \over
		p-1} B^{-1} s_i \; = \; \nabla \hat f_q(s_i).
	\ea
	$$
	Therefore, substituting these vectors in (\ref{eq-Dop1}),
	we get
	$$
	\left(\half\right)^{q-2} \| \nabla
	f_p(x_1) - \nabla f_p(x_2) \|_*^q \leq \la \nabla f_p(x_1)
	- \nabla f_p(x_2), x_1 - x_2 \ra.
	$$
	Thus, $\| \nabla f_p(x_1) - \nabla f_p(x_2) \|_* \leq
	2^{q-2 \over q-1} \| x_1 - x_2 \|^{1 \over q-1}$. It
	remains to note that ${1 \over q-1} = p-1 = \nu$.
\qed
\end{proof}

\begin{example}
	For real $p \geq 2$ and arbitrary $x_0 \in \E$, consider
	the following function:
	$$
	\ba{rcl}
	f(x) & \; = \; & \frac{1}{p} \|x - x_0\|^{p} \; = \; f_p(x -
	x_0),
	\quad x \in \E.
	\ea
	$$
	Then, $\sigmaf(p) \; = \; \left(\frac{1}{2} \right)^{p -
		2}$. Moreover, if $p = 2 + \nu$ for some $\nu \in (0, 1]$,
	then it holds
	\begin{displaymath}
	\ba{rcl}
	\Hfnu & \; \leq \; & (1 + \nu)2^{1 - \nu},
	\ea
	\end{displaymath}
	and $\Hf(\alpha) \; = \; +\infty$, for all
	$\alpha \in [0, 1] \setminus \{\nu\}$. Therefore, in this case we have
	$
	\gammafnu \; \geq \; \frac{1}{2(1 + \nu)},
	$
	and $\gamma_{f}(\alpha) = 0$ for all
	$\alpha \in [0, 1] \setminus \{\nu\}$.
\end{example}
\begin{proof}
	Let us take an arbitrary $x \neq 0$ and set $y := -x$. Then,
	\begin{displaymath}
	\langle \nabla f(x) - \nabla f(y), y - x \rangle \; = \;
	\langle \|x\|^{p - 2} Bx + \|x\|^{p - 2} Bx, 2 x \rangle
	\; = \; 4 \|x\|^{p}.
	\end{displaymath}
	On the other hand, $\| y - x \|^p = 2^p \| x \|^p$.
	Therefore, $\sigmaf(p) \refLE{SigmaDefinition} 2^{2-p}$,
	and (\ref{PoweredNormUniformConvexity}) tells us that this
	inequality is satisfied as equality.
	
		Let us prove now that $\Hfnu \leq (1 + \nu)2^{1 - \nu}$
	for $p = 2 + \nu$ with some $\nu \in (0, 1]$. This is
	\begin{equation} \label{PoweredNormHolderHessian}
	\| \nabla^2 f(x) - \nabla^2 f(y) \| \; \leq \; (1 + \nu) 2^{1 - \nu}
	\|x - y\|^{\nu}, \quad x, y \in \E.
	\end{equation}
	The corresponding Hessians can be represented as follows:
	\begin{displaymath}
	\ba{rcl}
	\nabla^2 f(x) & = & \|x\|^{\nu} B + \frac{\nu B x x^{*} B}{\|x\|^{2 - \nu}},
	\quad x \in \E \setminus \{0\}, \qquad \nabla^2 f(0) = 0.
	\ea
	\end{displaymath}
	
	For the case $x = y = 0$, inequality
	(\ref{PoweredNormHolderHessian}) is trivial. Assume now
	that $x \not= 0$. If $0 \in [x, y]$, then $y = -\beta x$
	for some $\beta \geq 0$ and we have:
	$$
	\ba{rcl}
	\| \nabla^2 f(x) - \nabla^2 f(-\beta x) \| \;
	&\leq& \; |1 - \beta^\nu | (1 + \nu) \|x\|^{\nu} \; \leq \; 
	(1 + \beta)^{\nu} (1 + \nu) 2^{1 - \nu} \|x\|^{\nu} \\
	\\
	&=& \; (1 + \nu) 2^{1 - \nu} \|x - y\|^{\nu},
	\ea
	$$
	which is~\eqref{PoweredNormHolderHessian}. Let $0 \notin
	[x, y]$. For an arbitrary fixed direction $h \in \E$, we
	get:
	\begin{displaymath}
	\ba{rcl}
	\bigl|  \bigl\langle (\nabla^2 f(x) - \nabla^2 f(y)) h, h \bigr\rangle
	\bigr| & = & \Bigl| \left( \|x\|^{\nu} - \|y\|^{\nu} \right) \cdot
	\|h\|^2  + \nu \cdot
	\left( \frac{\langle Bx, h \rangle^2}{\|x\|^{2 - \nu}} -
	\frac{\langle By, h \rangle^2}{\|y\|^{2 - \nu}}  \right) \Bigr|.
	\ea
	\end{displaymath}
	Consider the points $u = \frac{Bx}{\|x\|^{1 - \nu}} =
	\nabla f_q(x)$ and $v = \frac{By}{\|y\|^{1 - \nu}} =
	\nabla f_q(y)$ with $q = 1+\nu$. Then,
	\begin{displaymath}
	\ba{c}
	\|x\|^{\nu} = \|u\|_*, \quad  \frac{\langle B x, h
		\rangle^2}{\|x\|^{2 - \nu}} = \frac{\langle u, h
		\rangle^2}{\|u\|_*} \quad \text{and} \quad \|y\|^{\nu} =
	\|v\|_*, \quad \frac{\langle By, h \rangle^2}{\|y\|^{2 -
			\nu}} = \frac{\langle v, h \rangle^2}{\| v \|_*}.
	\ea
	\end{displaymath}
	Therefore,
	\begin{equation} \label{HessianNewVariables}
	\ba{cl}
	& \bigl|  \bigl\langle (\nabla^2 f(x) - \nabla^2 f(y)) h, h
	\bigr\rangle \bigr| \\[5pt]
	& \quad =  \Bigl| \left( \|u\|_* -
	\|v\|_* \right) \cdot
	\|h\|^2 \; + \; \nu
	\cdot \left( \frac{\langle u, h \rangle^2}{\|u\|_*}
	- \frac{\langle v, h \rangle^2}{\|v\|_*}
	\right) \Bigr|.
	\ea
	\end{equation}

	Let us estimate the right-hand side
	of~\eqref{HessianNewVariables} from above. Consider a
	continuously differentiable univariate function:
	\begin{displaymath}
	\ba{c}
	\phi(\tau) \; := \; \|u(\tau)\|_* \cdot \|h\|^2 + \nu
	\cdot \frac{\langle u(\tau), h \rangle^2}{\|u(\tau)\|_*},
	\quad u(\tau) \; := \; u + \tau (v -
	u), \quad \tau \in [0, 1].
	\ea
	\end{displaymath}
	Note that
	$$
	\ba{rcl}
	\phi^{\prime}(\tau) \; &=& \;  \frac{\langle u(\tau),
		B^{-1}(v-u)\rangle}{\|u(\tau)\|_*} \cdot \|h\|^2 + \frac{2
		\nu \langle u(\tau), h \rangle \langle v-u, h
		\rangle}{\|u(\tau)\|_*} \; - \; \frac{\nu\langle u(\tau),
		h \rangle^2 \langle u(\tau), B^{-1}(v-u)
		\rangle}{\|u(\tau)\|_*^3}
	\\[8pt]
	&=& \; \frac{\langle u(\tau), B^{-1}(v-u)
		\rangle}{\|u(\tau)\|_*} \cdot \underbrace{\left( \|h\|^2
		- \tfrac{\nu \langle u(\tau), h \rangle^2}{\|u(\tau)\|_*^2}
		\right)}_{\geq 0} \; + \; \frac{2\nu \langle u(\tau), h
		\rangle \langle v-u, h\rangle}{\| u(\tau) \|_*}.
	\ea
	$$
	Denote $\gamma := \frac{\langle u(\tau), h
		\rangle}{\|u(\tau)\|_* \cdot \|h\|} \in [-1, 1]$. Then,
	\begin{displaymath}
	\bigl| \phi^{\prime}(\tau) \bigr| \; \leq \; \|v - u\|_*
	\cdot \|h\|^2 \cdot \bigl(1 - \nu \gamma^2 + 2\nu|\gamma|
	\bigr) \; \leq \; (1 + \nu) \cdot \|v-u\|_* \cdot \|h\|^2.
	\end{displaymath}
	Thus, we have:
	\begin{equation} \label{PoweredNormSpecialBound}
	\bigl|  \bigl\langle (\nabla^2 f(x) - \nabla^2 f(y)) h, h
	\bigr\rangle \bigr| \; = \; | \phi(1) - \phi(0) | \; \leq
	\; (1 + \nu)  \cdot \|v - u\|_* \cdot \|h\|^2.
	\end{equation}
	It remains to use the definition of $u$ and $v$ and apply
	inequality (\ref{eq-HCond}) with $p=q$.
	Thus, we have proved, that for $p = 2 + \nu$ the Hessian of
	$f$ is H\"older continuous of degree~$\nu$. At the same
	time, taking $y = 0$, we get $\| \nabla^2 f(x) - \nabla^2
	f(y) \| = \| \nabla^2 f(x) \| = (1 + \nu)\|x\|^{\nu}$.
	These values cannot be uniformly bounded in $x \in \E$ by
	any multiple of $\|x\|^{\alpha}$ with $\alpha \neq \nu$.
	So, the Hessian of $f$ is \textit{not} H\"older continuous
	for any degree different from $2+\nu$.
\qed
\end{proof}

\BR
Inequalities (\ref{PoweredNormUniformConvexity}) and
(\ref{eq-HCond}) have the following symmetric
consequences:
$$
\ba{lll}
p \geq 2 & \Rightarrow & \| \nabla f_p(x) - \nabla f_p(y)
\|_* \; \geq \; 2^{2-p} \| x - y \|^{p-1}, \\
\\
p \leq 2 & \Rightarrow & \| \nabla f_p(x) - \nabla f_p(y)
\|_* \; \leq \;  2^{2-p} \| x - y \|^{p-1},
\ea
$$
which are valid for all $x, y \in \E$. 
\ER

%%%%%%%%%%%%%%%%%%%%%%%%%%%%%%%%%
%%%%%%%%%%%%%%%%%%%%%%%%%%%%%%%%%
%%%%%%%%%%%%%%%%%%%%%%%%%%%%%%%%%
\section{Regularized Newton Method} \label{SectionRegularizedNewton}
%%%%%%%%%%%%%%%%%%%%%%%%%%%%%%%%%
%%%%%%%%%%%%%%%%%%%%%%%%%%%%%%%%%
%%%%%%%%%%%%%%%%%%%%%%%%%%%%%%%%%

Let us start from the case when we know that for a specific
$\nu \in [0, 1]$ function $f$ has H\"older continuous Hessian:
$\H_f(\nu) < +\infty$. Then, from~\eqref{HolderHessianFuncBound}, we
have the global upper bound for the objective function:
\begin{displaymath}
\ba{rcl}
F(y) & \; \leq \; & M_{\nu, H}(x; y) \;\defeq \; Q(x; y)
+ \frac{H \|x - y\|^{2 + \nu}}{(1 + \nu)(2 + \nu)} + h(y),
\qquad x, y \in \dom F,
\ea
\end{displaymath}
where $H > 0$ is large enough: $H \geq \Hfnu$. Thus, it is
natural to employ the minimum of a regularized quadratic
model:
\begin{displaymath}
\ba{c}
T_{\nu, H}(x) \; \defeq \;
\argmin\limits_{y \in \dom F} M_{\nu, H}(x; y), \qquad M_{\nu, H}^{*}(x)
\; \defeq \; \min\limits_{y \in \dom F} M_{\nu, H}(x; y),
\ea
\end{displaymath}
and define the following general iteration
process~\cite{grapiglia2017regularized}:
\begin{equation} \label{GeneralRegularizedNewton}
\boxed{ \quad x_{k + 1} \; := \; T_{\nu, H_k}(x_k), \qquad k \geq 0 }
\end{equation}
where the value $H_k$ is chosen either to be a constant
from the interval $[0, 2\Hfnu]$ or by some adaptive
procedure.

For the class of uniformly convex functions of degree $p =
2 + \nu$, we can justify the following global convergence
result for this process.
\begin{theorem}
	Assume that for some $\nu \in [0, 1]$ we have $0 < \Hfnu <
	+\infty$ and $\sigmafnu > 0$. Let the coefficients $\{ H_k
	\}_{k \geq 0}$ in the
	process~\eqref{GeneralRegularizedNewton} satisfy the
	following conditions:
	\begin{equation} \label{TheoremFixedNuConditionH}
	0 \leq H_k \leq \beta\H_f(\nu), \qquad F(x_{k + 1}) \leq M_{\nu,
		H_k}^{*}(x_k), \qquad k \geq 0,
	\end{equation}
	with some constant $\beta \geq 0$. Then, for the sequence
	$\{x_k\}_{k \geq 0}$ generated by the process we have:
	\begin{equation} \label{TheoremFixedNuOneStepProgress}
	\ba{rcl}
	F(x_{k + 1}) - F^{*} & \leq &
	\Bigl( 1 \; - \; \frac{1 + \nu}{2 + \nu} \cdot
	\min\Bigl\{\frac{\gammafnu (1 + \nu)}{(1 + \beta) (2 + \nu)}, \, 1
	\Bigr\}^{\frac{1}{1 + \nu}} \Bigr) \left( F(x_k) - F^{*} \right).
	\ea
	\end{equation}
	Thus, the rate of convergence is linear and for reaching
	the gap $F(x_K) - F^{*} \leq \varepsilon$ it is enough to
	perform
	$
	K \; = \;  \bigl \lceil \frac{2 + \nu}{1 + \nu} \cdot
	\max\bigl\{
	\frac{(1 + \beta)(2 + \nu)}{\gammafnu (1 + \nu)}, \, 1
	\bigr\}^{\frac{1}{1 + \nu}}
	\log \frac{F(x_0) - F^{*}}{\varepsilon} \bigr \rceil
    $
	iterations.
\end{theorem}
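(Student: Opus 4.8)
The plan is to prove the one-step contraction~\eqref{TheoremFixedNuOneStepProgress} and then turn it into the iteration bound via the elementary inequality $-\log(1-a)\geq a$. Note first that $\sigmafnu>0$ ensures the existence of a (unique) minimizer $x^{*}$ with $0\in\partial F(x^{*})$. I would begin by converting the hypothesis $F(x_{k+1})\leq M_{\nu,H_k}^{*}(x_k)$ into a bound on $F(x_{k+1})$ in terms of $F$ alone: since $M_{\nu,H_k}^{*}(x_k)=\min_{y\in\dom F}M_{\nu,H_k}(x_k;y)$ and, by~\eqref{HolderHessianFuncBound}, $Q(x_k;y)\leq f(y)+\tfrac{\Hfnu\|x_k-y\|^{2+\nu}}{(1+\nu)(2+\nu)}$, adding $h(y)$ and the regularizer and using $H_k\leq\beta\Hfnu$ gives
\[
F(x_{k+1})\;\leq\;\min_{y\in\dom F}\Bigl[\,F(y)+\tfrac{(1+\beta)\Hfnu}{(1+\nu)(2+\nu)}\|x_k-y\|^{2+\nu}\,\Bigr].
\]

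Next I would restrict this minimization to the segment $y=x_k+t(x^{*}-x_k)$, $t\in[0,1]$ (admissible because $\dom F$ is convex), on which $\|x_k-y\|=t\|x_k-x^{*}\|$ and, by plain convexity of $F$, $F(y)\leq F(x_k)-t\,(F(x_k)-F^{*})$. To eliminate $\|x_k-x^{*}\|$ I would invoke~\eqref{UniformlyConvexLowerBound} at $x^{*}$ with subgradient $0$ and $p=2+\nu$, which yields $\|x_k-x^{*}\|^{2+\nu}\leq\tfrac{(2+\nu)(F(x_k)-F^{*})}{\sigmafnu}$. Substituting and recalling $\gammafnu=\sigmafnu/\Hfnu$ from~\eqref{ConditionNumberDefinition}, this reduces everything to a one-dimensional problem:
\[
F(x_{k+1})-F^{*}\;\leq\;(F(x_k)-F^{*})\cdot\min_{t\in[0,1]}g(t),\qquad g(t)=1-t+\tfrac{1+\beta}{(1+\nu)\gammafnu}\,t^{2+\nu}.
\]

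The function $g$ is convex, with unconstrained minimizer $t_{*}=X^{1/(1+\nu)}$ where $X\defeq\tfrac{\gammafnu(1+\nu)}{(1+\beta)(2+\nu)}$. If $X\leq1$ then $t_{*}\in(0,1]$ and, using $t_{*}^{1+\nu}=X$, a direct computation gives $g(t_{*})=1-\tfrac{1+\nu}{2+\nu}X^{1/(1+\nu)}$; if $X>1$ then $g$ is decreasing on $[0,1]$, hence $\min_{[0,1]}g=g(1)=\tfrac{1+\beta}{(1+\nu)\gammafnu}\leq\tfrac{1}{2+\nu}=1-\tfrac{1+\nu}{2+\nu}$. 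In both cases $\min_{[0,1]}g\leq1-\tfrac{1+\nu}{2+\nu}\min\{1,X\}^{1/(1+\nu)}$, which is precisely~\eqref{TheoremFixedNuOneStepProgress}.

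Finally, writing this factor as $1-a$ with $a=\tfrac{1+\nu}{2+\nu}\min\{1,X\}^{1/(1+\nu)}\in[0,1)$ and iterating over $k$, the bound $(F(x_0)-F^{*})(1-a)^{K}\leq\varepsilon$ is guaranteed once $K\geq\tfrac{1}{a}\log\tfrac{F(x_0)-F^{*}}{\varepsilon}$, because $-\log(1-a)\geq a$; and $\tfrac{1}{a}=\tfrac{2+\nu}{1+\nu}\max\{1,1/X\}^{1/(1+\nu)}$ coincides with the stated constant, which gives the displayed value of $K$ after taking the ceiling. The only genuinely delicate point is the case split in the univariate minimization and checking that both branches are dominated by the single closed-form expression carrying the $\min\{1,\cdot\}$; everything else is routine, provided one carefully tracks the factor $(1+\beta)$ produced by $H_k\leq\beta\Hfnu$.
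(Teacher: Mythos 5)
Your proposal is correct and follows essentially the same route as the paper: upper-bound $M_{\nu,H_k}^{*}(x_k)$ by $F(y)$ plus the $(1+\beta)\Hfnu$-regularization term via~\eqref{HolderHessianFuncBound}, restrict $y$ to the segment toward $x^{*}$, eliminate $\|x_k-x^{*}\|$ through uniform convexity at $x^{*}$, and optimize the resulting univariate function with the same case split on $\min\{1,X\}$. The only difference is that you spell out the final passage from the one-step contraction to the iteration count (via $-\log(1-a)\geq a$), which the paper leaves implicit.
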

\begin{proof}
	
	As in the proof of Theorem 3.1 in~\cite{grapiglia2017regularized}, from~\eqref{TheoremFixedNuConditionH} one can see that
	$$
	\ba{rcl}
	F(x_{k + 1}) & \leq &  F(x_k) - \alpha \left( F(x_k) -
	F^{*} \right) + \alpha^{2 + \nu} \frac{(1 + \beta) \Hfnu \|x_k -
		x^{*}\|^{2 + \nu}}{(1 + \nu)(2 + \nu)},
	\ea
	$$
	for any $\alpha \in [0, 1]$.
	Then, taking into account the uniform 
	convexity~\eqref{UniformlyConvexLowerBound}, we get
	$$
	\ba{rcl}
	F(x_{k + 1}) 
	& \leq & 
	F(x_k)
	- \left( \alpha - \alpha^{2 + \nu} \frac{(1 + \beta) \Hfnu}{(1 +
		\nu) \sigmafnu} \right) \left( F(x_k) - F^{*} \right).
	\ea
	$$
	The minimum of the right-hand side is attained at
	$\alpha^{*} = \min  \bigl\{ \frac{ \gammafnu (1 + \nu)}{(2 + \nu)(1 +
		\beta)}, 1
	\bigr\}^{\frac{1}{1 + \nu}}\!\!$. Plugging this value
	into the bound above, we get
	inequality~\eqref{TheoremFixedNuOneStepProgress}.
\qed
\end{proof}

Unfortunately, in practice it is difficult to decide on an
appropriate value of $\nu \in [0, 1]$ with $\Hfnu <
+\infty$. Therefore, it is interesting to develop the
\textit{universal methods} which are not based on some
particular parameters. Recently, it was
shown~\cite{grapiglia2017regularized} that one good
choice for such universal scheme is the cubic
regularization of the Newton Method
\cite{nesterov2006cubic}. This is actually the
process~\eqref{GeneralRegularizedNewton} with the fixed
parameter $\nu = 1$. For this choice, in the rest part of
the paper we omit the corresponding index in the
definitions of all necessary objects:
$M_H(x; y) := M_{1, H}(x; y)$, 
$T_H(x) := T_{1, H}(x)$, and $M_H^{*}(x) := M_{1, H}^{*}(x) = M_H(x; T_H(x))$.
The adaptive scheme of our method with dynamic estimation
of the constant $H$ is as follows.

\vspace*{5pt}
\noindent
\hspace*{5pt}
\lfbox{
	\hspace*{4pt}
	\centering
	\begin{minipage}[b]{0.85\linewidth} \label{MainAlgorithm}
		\begin{algorithm}[H] 
		  	\caption{\small \textbf{Adaptive Cubic
					Regularization of Newton Method}}
			\vspace*{6pt}
			\noindent\makebox[\linewidth]{\rule{1.097\linewidth}{0.4pt}}
			\begin{algorithmic}[1]
				%\vspace{5pt}
				\Require Choose $x_0 \in \dom F$, $H_0 > 0$.
				\vspace*{2pt}
				\Ensure $k \geq 0$. \vspace*{2pt}
				\State Find the minimal integer $i_k \geq 0 \;$
				such that $F(T_{H_k 2^{i_k}}(x_k)) \leq M^{*}_{H_k
					2^{i_k}} (x_k)$. %\vspace*{1pt}
				\State Perform the Cubic Step: $x_{k + 1}  = T_{H_k
					2^{i_k}}(x_k)$. \vspace*{1pt}
				\State Set $H_{k + 1} := 2^{i_k - 1} H_k $.
				\vspace*{5pt}
			\end{algorithmic}
		\end{algorithm}
	\end{minipage}
	\hspace*{4pt}
}
\vspace*{1pt}

Let us present the main properties of the composite Cubic
Newton step $x \mapsto T_H(x)$. Denote
$$
r_H(x) \defeq \|T_H(x) - x\|.
$$
Since point $T_H(x)$ is a minimum of strictly convex
function $M_H(x;\cdot)$, it satisfies the following
first-order optimality condition:
\beq \label{CubicStepStationary}
\ba{cl}
&\bigl \la \nabla f(x) + \nabla^2 f(x)(T_H(x) - x) +
\tfrac{H r_H(x)}{2}B(T_H(x) - x), y - T_H(x) \bigr \ra  \; + \\
\\
& \quad h(y) \; \geq \; h(T_H(x)),
\qquad y \in \dom F.
\ea
\eeq 
In other words, the vector
$$
\ba{rcl}
h'(T_H(x)) & \defeq & -\nabla f(x) - \nabla^2 f(x)(T_H(x) -
x) - \frac{H r_H(x)}{2}B(T_H(x) - x)
\ea
$$
belongs to the subdifferential of $h$:
\begin{equation} \label{CubicStepSubdiff}
\ba{rcl}
h'(T_H(x)) &\; \in \; & \partial h(T_H(x)).
\ea
\end{equation}
Computation of a point $T = T_H(x)$, satisfying condition
\eqref{CubicStepSubdiff}, requires some standard techniques
of Convex Optimization and Linear Algebra
(see~\cite{nesterov2006cubic,nesterov2019implementable,agarwal2017finding,carmon2016gradient}). Arithmetical
complexity of such a procedure is usually similar to that
of the standard Newton step.

Plugging into~\eqref{CubicStepStationary} $y := x \in
\dom F$, we get:
\begin{align} 
&\la \nabla f(x), x - T_H(x) \ra \label{StationaryConsequence} \\
&\geq \; \la \nabla^2 f(x) (T_H(x) - x), T_H(x) - x \ra +
\tfrac{H r_H^3(x)}{2} + h(T_H(x)) - h(x). \notag
\end{align}
Thus, we obtain the following bound for the minimal value
$M_H^{*}(x)$ of the cubic model:
\beq \label{CubicModelMinimumExpression}
\ba{rcl}
M_H^{*}(x) 
& \overset{\eqref{StationaryConsequence}}{\leq} &
f(x) -
\tfrac{1}{2}\la \nabla^2 f(x)(T_H(x) - x), T_H(x) - x \ra -
\tfrac{H r_H^3(x)}{3} + h(x) \\[5pt]
& = &
F(x) - \tfrac{1}{2}\la \nabla^2 f(x)(T_H(x) - x),
T_H(x) - x \ra - \tfrac{Hr_H^3(x)}{3}. 
\ea
\eeq

If for some value $\nu \in [0, 1]$ the Hessian is H\"older continuous:
$\Hfnu < +\infty$, then by \eqref{HolderHessianGradBound}
and~\eqref{CubicStepSubdiff} we get the following bound for
the subgradient:
$$
F'(T_H(x)) \defeq \nabla f(T_H(x)) + h'(T_H(x))
$$
at the new point:
\beq \label{CubicGradNewPoint}
\ba{cl}
& \|  F'(T_H(x)) \|_{*} \\[5pt]
& \leq \;
  \| \nabla f(T_H(x)) - \nabla f(x) - \nabla^2 f(x)
(T_H(x) - x)\|_{*} + \tfrac{H r^2_H(x)}{2} \\[5pt]
& \stackrel{\eqref{HolderHessianGradBound}}{\leq}
\tfrac{\Hfnu
	r^{1 + \nu}_H(x)}{1 + \nu}  +  \tfrac{H r^2_H(x)}{2} 
\; = \; 
r^{1 + \nu}_H(x) \cdot \Bigl(  \tfrac{\Hfnu}{1 + \nu} + 
\tfrac{Hr_H^{1 - \nu}(x)}{2} \Bigr). 
\ea
\eeq

One of the main strong point of the classical Newton's is
its local \textit{quadratic convergence} for the class of
strongly convex functions with Lipschitz continuous
Hessian: $\sigmaf(2) > 0$ and $0 < \Hf(1) < +\infty$ (see,
for example,~\cite{nesterov2018lectures}). This property
holds for the cubically regularized Newton as
well~\cite{nesterov2006cubic,nesterov2008accelerating}.
Indeed, ensuring $F(T_H(x)) \leq M_{H}^{*}(x)$ as in Algorithm~1, and
having $H \leq \beta \Hf(1)$ with some $\beta \geq 0$, we get:
$$
\ba{rcl}
F(T_H(x)) - F^{*}
& \overset{\eqref{UniformlyConvexGradientBound}}{\leq} &
\frac{1}{2\sigmaf(2)}\| F'(T_H(x)) \|_{*}^2 
\; \overset{\eqref{CubicGradNewPoint}}{\leq} \;
\frac{(1 + \beta)^2 \Hf^2(1)}{8 \sigmaf(2)} r_H^4(x) \\
\\
& \leq & 
\frac{(1 + \beta)^2 \Hf^2(1)}{8 \sigmaf^3(2)}
\la \nabla^2 f(x)(T_H(x) - x), T_H(x) - x \ra^2 \\
\\
& \overset{\eqref{CubicModelMinimumExpression}}{\leq} &
\frac{(1 + \beta)^2 \Hf^2(1)}{2 \sigmaf^3(2)}
\left( F(x)- F^{*} \right)^2.
\ea
$$
And the region of quadratic convergence is as follows:
\begin{displaymath}
\ba{rcl}
\mathcal{Q} & \; = \; & \bigl\{ \, x \in \dom F \; : \;
F(x) - F^{*} \; \leq \;  \frac{2\sigmaf^3(2)}{(1 + \beta)^2 \Hf^2(1)}
\, \bigr\}.
\ea
\end{displaymath}
After reaching it, the method starts to double the right
digits of the answer at every step, and this cannot last
for a long time. Therefore, from now on we are mainly
interested in the \textit{global complexity bounds} of
Algorithm~1, which work for an arbitrary starting
point $x_0$.

For noncomposite case, as it was shown
in~\cite{grapiglia2017regularized}, if for some $\nu \in
[0, 1]$ we have $0 < \Hfnu < +\infty$ and the objective is
just \textit{convex}, then Algorithm 1 with small
initial parameter $H_0$ generates a solution $\hat{x}$
with $f(\hat{x}) - f^{*} \leq \varepsilon$ in
$
O\bigl( \bigl(\frac{\Hfnu D_0^{2 +
		\nu}}{\varepsilon}\bigr)^{\frac{1}{1 + \nu}} \bigr)
$
iterations, where $D_0 \; := \;
\max\limits_{x}\left\{ \|x - x^{*}\| \; : \; f(x) \leq
f(x_0) \right\}$. Thus, the method
in~\cite{grapiglia2017regularized} has a sublinear rate of
convergence on the class of convex functions with H\"older
continuous Hessian. It can \textit{automatically adapt} to
the actual level of smoothness. In what follows we show
that the same algorithm achieves linear rate of
convergence for the class of \textit{uniformly convex}
functions of degree $p = 2 + \nu$, 
namely for functions with strictly positive
condition number:
$
\sup_{\nu \in [0, 1]} \gammafnu > 0.
$

In the remaining part of the paper, we usually assume that
the smooth part of our objective is not \textit{purely
	quadratic}. This is equivalent to the condition
$\inf_{\nu \in [0, 1]} \Hfnu > 0$. However, to
conclude this section, let us briefly discuss the case
$\min_{\nu \in [0, 1]} \Hfnu = 0$.
If we would know in advance that $f$ is a convex quadratic
function, then no regularization is needed since a single
step $x \mapsto T_H(x)$ with $H := 0$ solves the
problem. However, if our function is given by a black-box
oracle and we do not know a priori that its smooth part is
quadratic, then we can still use  Algorithm~1. For this
case, we prove the following simple result.

\begin{proposition}
	Let $A: \E \to \E^{*}$ be a self-adjoint positive semidefinite linear
	operator and $b \in \E^{*}$. Assume that
	$
	f(x) \; := \; \frac{1}{2}\langle Ax, x \rangle - \langle b, x \rangle,
	$
	and the minimum
	$
	x^{*} \in \Argmin_{x \in \dom F} \bigl\{ F(x)
	:= f(x) + h(x)\bigr\}
	$
	does exist. Then, in order to get $F(x_K) - F^{*} \leq
	\varepsilon$ with arbitrary $\varepsilon
	> 0$, it is enough to perform
	\begin{equation} \label{CubicNewtonQuadraticComplexity}
	\ba{rcl}
	K & \; = \; & \bigl\lceil \log_2 \frac{H_0 \|x_0 - x^{*}\|^3}{6
		\varepsilon} \; + \; 1 \bigr\rceil
	\ea
	\end{equation}
	iterations of Algorithm 1.
\end{proposition}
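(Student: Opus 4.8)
The plan is to exploit that, for a convex quadratic $f$, the quadratic model is exact, $Q(x;y) = f(y)$, so that $M_H(x;y) = F(y) + \frac{H}{6}\|y-x\|^3$ for every $H \geq 0$ and all $x,y \in \dom F$. This has two immediate consequences. First, $F(T_H(x)) \leq F(T_H(x)) + \frac{H}{6}r_H^3(x) = M_H(x;T_H(x)) = M_H^{*}(x)$, so the acceptance test in Step~1 of Algorithm~1 is passed already with $i_k = 0$; hence $i_k = 0$ for every $k$, the regularization parameter is halved at each iteration, $H_k = H_0/2^k$, and $x_{k+1} = T_{H_k}(x_k)$. Second, minimizing $M_{H_k}(x_k;\cdot)$ over $\dom F$ and comparing with the feasible point $y = x^{*}$ gives
$$
F(x_{k+1}) \;\leq\; M_{H_k}^{*}(x_k) \;\leq\; M_{H_k}(x_k; x^{*}) \;=\; F^{*} + \frac{H_k}{6}\|x_k - x^{*}\|^3 .
$$

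The one genuinely non-trivial step is to control $\|x_k - x^{*}\|$, and I would show it is non-increasing. Write $r_k \defeq \|x_{k+1}-x_k\|$ and assume $r_k > 0$ (otherwise $x_{k+1}=x_k$ and there is nothing to prove). Since $\nabla f(x_k) + \nabla^2 f(x_k)(x_{k+1}-x_k) = \nabla f(x_{k+1})$ for the quadratic $f$, the stationarity condition \eqref{CubicStepStationary} says that
$$
F'(x_{k+1}) \;\defeq\; \nabla f(x_{k+1}) + h'(x_{k+1}) \;=\; -\frac{H_k r_k}{2}\,B(x_{k+1}-x_k)
$$
is a subgradient of $F$ at $x_{k+1}$. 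Convexity of $F$ together with optimality of $x^{*}$ give $\langle F'(x_{k+1}),\, x^{*}-x_{k+1}\rangle \leq F^{*} - F(x_{k+1}) \leq 0$, i.e. $\langle B(x_{k+1}-x_k),\, x^{*}-x_{k+1}\rangle \geq 0$. Substituting this into $\|x_k-x^{*}\|^2 = \|x_{k+1}-x^{*}\|^2 + 2\langle B(x_{k+1}-x_k),\, x^{*}-x_{k+1}\rangle + r_k^2$ yields $\|x_{k+1}-x^{*}\| \leq \|x_k - x^{*}\|$, hence by induction $\|x_k - x^{*}\| \leq \|x_0 - x^{*}\|$ for all $k \geq 0$.

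Combining the two facts, $F(x_{k+1}) - F^{*} \leq \frac{H_k}{6}\|x_k-x^{*}\|^3 \leq \frac{H_0\|x_0-x^{*}\|^3}{6\cdot 2^{k}}$, so $F(x_K) - F^{*} \leq \frac{H_0\|x_0-x^{*}\|^3}{6\cdot 2^{K-1}}$. This is at most $\varepsilon$ precisely when $2^{K-1} \geq \frac{H_0\|x_0-x^{*}\|^3}{6\varepsilon}$, i.e. when $K \geq \log_2\frac{H_0\|x_0-x^{*}\|^3}{6\varepsilon} + 1$, and taking the ceiling gives exactly \eqref{CubicNewtonQuadraticComplexity}. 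The only point that needs care is the monotonicity of $\|x_k-x^{*}\|$: since $A$ is merely positive semidefinite, $F$ need not be strongly convex, so the contraction cannot come from uniform convexity and must instead be extracted from the first-order optimality condition together with plain convexity of $F$; everything else is routine once one observes that $f$ being quadratic makes both the cubic model exact and the adaptive inner loop trivial.
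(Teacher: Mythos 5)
Your proof is correct and follows essentially the same route as the paper: exactness of the quadratic model forces $i_k=0$ and $H_k=2^{-k}H_0$, the comparison $M_{H_k}^{*}(x_k)\le M_{H_k}(x_k;x^{*})$ gives the function-value bound, and the monotonicity of $\|x_k-x^{*}\|$ is extracted from the first-order optimality condition plus plain convexity, exactly as in the paper. No gaps.
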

\begin{proof}
	In our case, the quadratic model coincides with the smooth
	part of the objective:
	$
	Q(x; y) \equiv  f(y), \; x, y \in \E.
	$
	Therefore, at every iteration $k \geq 0$ of Algorithm 1 we  have
	$i_k = 0$ and $H_k = 2^{-k} H_0$.
	Note that $ x_{k + 1} = T_{2^{-k} H_0}(x_k) =
	\argmin_{y \in \dom F}\bigl\{ F(y) +
	\tfrac{2^{-k}H_0}{6}\|y - x_k\|^3 \bigr\}$, and
	\begin{equation} \label{CubicNewtonQuadraticFunctionProgress}
	\ba{rcl}
	F(x_{k + 1}) & \; \leq \; & F(y) + \frac{2^{-k} H_0}{6}\|y -
	x_k\|^3, \qquad y \in \dom F.
	\ea
	\end{equation}
	Let us prove that $\|x_{k + 1} - x^{*}\| \leq \|x_k -
	x^{*}\|$ for all $k \geq 0$. If this is true, then
	plugging $y \equiv x^{*}$
	into~\eqref{CubicNewtonQuadraticFunctionProgress}, we get:
	$F(x_{k + 1}) - F^{*} \leq 2^{-k}\frac{H_0}{6}\|x_0 -
	x^{*}\|^3$ which results in the
	estimate~\eqref{CubicNewtonQuadraticComplexity}.
	Indeed,
	$$
	\ba{rcl}
	\|x_k - x^{*}\|^2  & = &  \|(x_k - x_{k + 1}) + (x_{k + 1} -
	x^{*}) \|^2  \\[5pt]
	& = & \|x_{k + 1} - x^{*}\|^2  + \| x_{k} - x_{k + 1} \|^2 
	+ 2\langle B(x_k - x_{k + 1}), x_{k + 1} - x^{*} \rangle,
	\ea
	$$
	and it is enough to show that $\langle B(x_k - x_{k + 1}),
	x^{*} - x_{k + 1} \rangle \; \leq \; 0$.
	 Since $x_{k + 1}$ satisfies the first-order optimality
	condition:
	\begin{equation} \label{QuadraticFunctionStationaryCondition}
	\ba{rcl}
	-2^{-(k + 1)}H_0 \|x_{k + 1} - x_k\| B(x_{k + 1} - x_k)
	& \; \defeq \; & F'(x_{k + 1}) \; \in \; \partial F(x_{k + 1}),
	\ea
	\end{equation}
	we have:
	\begin{displaymath}
	\ba{rcl}
	\langle B(x_k - x_{k + 1}), x^{*} - x_{k + 1} \rangle
	& \overset{\eqref{QuadraticFunctionStationaryCondition}}{=} &
	\frac{2^{k + 1}}{H_0 \|x_k - x_{k + 1}\|} \langle F'(x_{k + 1}),
	x^{*} - x_{k + 1} \rangle \; \leq \; 0,
	\ea
	\end{displaymath}
	where the last inequality follows from the convexity of
	the objective.
\qed
\end{proof}

%%%%%%%%%%%%%%%%%%%%%%%%%%%%%%%%%
%%%%%%%%%%%%%%%%%%%%%%%%%%%%%%%%%
%%%%%%%%%%%%%%%%%%%%%%%%%%%%%%%%%
\section{Complexity Results for Uniformly Convex Functions}
\label{SectionHolderHessianComplexity}
%%%%%%%%%%%%%%%%%%%%%%%%%%%%%%%%%
%%%%%%%%%%%%%%%%%%%%%%%%%%%%%%%%%
%%%%%%%%%%%%%%%%%%%%%%%%%%%%%%%%%

In this section, we are going to justify the global linear
rate of convergence of Algorithm 1 for a class of twice
differentiable uniformly convex functions with H\"older
continuous Hessian. Universality of this method is ensured
by the adaptive estimation of the parameter $H$ over the
whole sequence of iterations. It is important to
distinguish two cases: $H_{k + 1} < H_k$ and $H_{k + 1}
\geq H_{k}$.

First, we need to estimate the progress in the objective
function after minimizing the cubic
model. There are two different
situations here:
$$
\mbox{either $H r^{1 - \nu}_H(x) \leq \frac{2 \Hfnu}{1 +
		\nu}$, or $H r^{1 - \nu}_H(x) > \frac{2 \Hfnu}{1 + \nu}$.}
$$
\begin{lemma} \label{LemmaStepHolder}
	Let $0 < \Hfnu < +\infty$ and $\sigmafnu > 0$ for some
	$\nu \in [0,1]$. Then, for arbitrary $x \in \dom F$ and $H
	> 0$ we have:
	\beq \label{OneStepProgressHolder}
	\ba{cl}
	& F(x) - M_H^{*}(x)  \\[5pt] 
	& \quad \geq \; \min\Bigl[ \left( F(x) -
	F^{*} \right) \cdot \frac{(1 + \nu)}{(2 + \nu)} \cdot
	\min\bigl\{\bigl( \frac{(1 + \nu) \gammafnu}{2(2 + \nu)}  \bigr)^{\frac{1}{1 + \nu}}, 
	\; 1\bigr\} ,   \\[5pt]
	& \; \qquad \qquad \left( F(T_H(x)) - F^{*} \right)^{\frac{3(1 + \nu)}{2(2 + \nu)}}
	\cdot \bigl ( \frac{2 + \nu}{1 + \nu}  \bigr)^{\frac{3(1 + \nu)}{2(2
			+ \nu)}} \cdot \frac{(\sigmafnu)^{\frac{3}{2(2 + \nu)}}}{ 3 \sqrt{H}
	}  \;  \Bigr]. 
	\ea
	\eeq
\end{lemma}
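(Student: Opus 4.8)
The plan is to start from the basic descent estimate \eqref{CubicModelMinimumExpression}, which, writing $T\equiv T_H(x)$ and $r\equiv r_H(x)$, already gives
\[
F(x)-M_H^{*}(x)\;\ge\;\tfrac12\langle\nabla^2 f(x)(T-x),T-x\rangle+\tfrac13 Hr^3\;\ge\;\tfrac13 Hr^3 .
\]
I would then split along the dichotomy announced just before the lemma, $Hr^{1-\nu}>\frac{2\Hfnu}{1+\nu}$ versus $Hr^{1-\nu}\le\frac{2\Hfnu}{1+\nu}$, and show that the ``large regularization'' case produces the second argument of the $\min$ and the ``mild regularization'' case produces the first.

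\emph{Large regularization, $Hr^{1-\nu}>\frac{2\Hfnu}{1+\nu}$.} Here the cubic term dominates in \eqref{CubicGradNewPoint}, i.e. $\frac{\Hfnu r^{1+\nu}}{1+\nu}<\frac{Hr^2}{2}$, so $\|F'(T_H(x))\|_{*}<Hr^2$, hence $r>(\|F'(T_H(x))\|_{*}/H)^{1/2}$; combined with $F(x)-M_H^{*}(x)\ge\frac13 Hr^3$ this yields $F(x)-M_H^{*}(x)>\|F'(T_H(x))\|_{*}^{3/2}/(3\sqrt H)$. It then remains to lower bound $\|F'(T_H(x))\|_{*}$: inverting Lemma~\ref{UniformlyConvexConsequence} applied at the point $T_H(x)$ with $p=2+\nu$ gives $\|F'(T_H(x))\|_{*}\ge\bigl(\tfrac{2+\nu}{1+\nu}\bigr)^{(1+\nu)/(2+\nu)}(\sigmafnu)^{1/(2+\nu)}\bigl(F(T_H(x))-F^{*}\bigr)^{(1+\nu)/(2+\nu)}$, and raising to the power $3/2$ and substituting reproduces exactly the second branch of the claimed bound.

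\emph{Mild regularization, $Hr^{1-\nu}\le\frac{2\Hfnu}{1+\nu}$.} For the test points $y_t\equiv x+t(x^{*}-x)$, $t\in[0,1]$, one has by \eqref{HolderHessianFuncBound} and convexity of $F$
\begin{align*}
M_H^{*}(x)\;&\le\;M_H(x;y_t)\\
&\le\;F(x)-t\bigl(F(x)-F^{*}\bigr)+\frac{\Hfnu\,t^{2+\nu}\|x-x^{*}\|^{2+\nu}}{(1+\nu)(2+\nu)}+\frac{H\,t^{3}\|x-x^{*}\|^{3}}{6}.
\end{align*}
The point of the case hypothesis is that the cubic term equals $\frac{H(t\|x-x^{*}\|)^{1-\nu}}{6}\,t^{2+\nu}\|x-x^{*}\|^{2+\nu}$, so as soon as $t\|x-x^{*}\|\le r$ it is at most $\frac{\Hfnu}{3(1+\nu)}t^{2+\nu}\|x-x^{*}\|^{2+\nu}$; thus the cubic model behaves here like a H\"older model with constant at most $2\Hfnu$. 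Using uniform convexity in the form $\|x-x^{*}\|^{2+\nu}\le\frac{(2+\nu)(F(x)-F^{*})}{\sigmafnu}$ and maximizing over $t$ exactly as in the proof of Theorem~1 (with ``$1+\beta$'' now $\tfrac{5+\nu}{3}\le 2$) reproduces the first branch of the bound, because $\tfrac{3}{5+\nu}\ge\tfrac12$ keeps the resulting coefficient no smaller than $\bigl(\tfrac{(1+\nu)\gammafnu}{2(2+\nu)}\bigr)^{1/(1+\nu)}$. This works as long as the (capped) optimal $t$ obeys $t\|x-x^{*}\|\le r$, in particular whenever $r\ge\|x-x^{*}\|$.

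The delicate point, and the part I expect to require careful constant‑chasing, is the complementary sub‑regime of the mild case, where the cubic step $r$ is too short to reach the optimal test point. There one instead invokes \eqref{CubicGradNewPoint}, which under the case hypothesis gives $\|F'(T_H(x))\|_{*}\le\frac{2\Hfnu r^{1+\nu}}{1+\nu}$, whence $F(T_H(x))-F^{*}=O(r^{2+\nu})$ by Lemma~\ref{UniformlyConvexConsequence}; together with $M_H^{*}(x)=M_H(x;T_H(x))\le F(T_H(x))+\frac{(5+\nu)\Hfnu r^{2+\nu}}{3(1+\nu)(2+\nu)}$ (again from \eqref{HolderHessianFuncBound} and the case hypothesis) this gives $F(x)-M_H^{*}(x)\ge (F(x)-F^{*})-O(r^{2+\nu})$; since in this sub‑regime $r$ is bounded by a multiple of $\|x-x^{*}\|$ proportional to a power of $\gammafnu$, the $O(r^{2+\nu})$ deficit is only a fraction of $F(x)-F^{*}$, and the constants can be arranged so that what remains still dominates the first branch. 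Reconciling the step length $r_H(x)$, the distance $\|x-x^{*}\|$ and the size of the cubic regularization term in this mild‑regularization case is the main obstacle of the proof.
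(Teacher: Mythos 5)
Your second case ($H r_H^{1-\nu}(x) > \frac{2\Hfnu}{1+\nu}$) is correct and is exactly the paper's argument: $\|F'(T_H(x))\|_* < H r_H^2(x)$ from \eqref{CubicGradNewPoint}, combined with $F(x)-M_H^*(x)\ge \frac{H}{3}r_H^3(x)$ and the inverted form of Lemma~\ref{UniformlyConvexConsequence}. The gap is in the mild-regularization case. Evaluating the cubic model directly at $y_t=x+t(x^*-x)$ leaves you with the term $\frac{H t^3\|x-x^*\|^3}{6}$, which the hypothesis $H r_H^{1-\nu}(x)\le\frac{2\Hfnu}{1+\nu}$ controls only when $t\|x-x^*\|\le r_H(x)$. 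You flag the complementary sub-regime yourself as ``the main obstacle,'' but the argument you sketch for it does not close: the deficit you produce there is of order $\bigl(\Hfnu + \sigmafnu^{-1/(1+\nu)}\Hfnu^{(2+\nu)/(1+\nu)}\bigr) r^{2+\nu}/\; $, and after substituting $r<t^{\mathrm{opt}}\|x-x^*\|$ and $\|x-x^*\|^{2+\nu}\le\frac{(2+\nu)(F(x)-F^*)}{\sigmafnu}$ with $t^{\mathrm{opt}}\sim\gammafnu^{1/(1+\nu)}$, the powers of $\gammafnu$ cancel and you are left with an absolute-constant fraction of $F(x)-F^*$ that must be compared against the target coefficient $\frac{1+\nu}{2+\nu}\min\{(\gammafnu\frac{1+\nu}{2(2+\nu)})^{1/(1+\nu)},1\}$; a direct check at $\nu=1$ and $\gammafnu$ near the cap shows the required inequality fails. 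So the sub-regime is not merely unfinished constant-chasing; the route needs a different idea.

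The idea you are missing is that the restriction $t\|x-x^*\|\le r_H(x)$ can be removed entirely. The point $T_H(x)$ is also the global minimizer of the convex function
$$
\psi(y) \;=\; Q(x;y) \;+\; \frac{H r_H^{1-\nu}(x)}{2(2+\nu)}\,\|y-x\|^{2+\nu} \;+\; h(y),
$$
because the coefficient is calibrated so that the gradient of the new regularizer at $y=T_H(x)$ equals $\frac{H r_H(x)}{2}B(T_H(x)-x)$, i.e.\ exactly the gradient of $\frac{H}{6}\|y-x\|^3$ there, so the optimality condition \eqref{CubicStepSubdiff} is the same for both models. Since moreover $\frac16\le\frac{1}{2(2+\nu)}$ for $\nu\in[0,1]$, one gets $M_H^*(x)=Q(x;T_H(x))+\frac{H}{6}r_H^3(x)+h(T_H(x))\le\psi(T_H(x))\le\psi(y)$ for \emph{every} $y\in\dom F$. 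This converts the cubic penalty into a $(2+\nu)$-power penalty with coefficient $\frac{H r_H^{1-\nu}(x)}{2(2+\nu)}\le\frac{\Hfnu}{(1+\nu)(2+\nu)}$ uniformly in $y$; adding the $\frac{\Hfnu\|y-x\|^{2+\nu}}{(1+\nu)(2+\nu)}$ term from \eqref{HolderHessianFuncBound} gives $M_H^*(x)\le F(y)+\frac{2\Hfnu\|y-x\|^{2+\nu}}{(1+\nu)(2+\nu)}$ for all $y$, after which your own computation on the segment $y_t$ (with effective factor $2$ in place of your $\frac{5+\nu}{3}$) yields the first branch with the stated constant, and the troublesome sub-regime never arises.
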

\begin{proof}
	Let us consider two cases. 1) 
	$H r_H^{1 - \nu}(x) \leq \frac{2 \Hfnu}{1 + \nu}$.
		Then, for arbitrary $y \in \dom F$, we have:
		$$
		\ba{rcl}
		M_H^{*}(x) 
		& := &  Q(x; T_H(x)) +
		\frac{H}{6}\|T_H(x) - x\|^3 + h(T_H(x)) \\[7pt]
		& \leq &  Q(x; y) + \frac{H r_H^{1 - \nu}(x) \|y -
			x\|^{2 + \nu}}{2 (2 + \nu) } + h(y) \\[5pt]
		& \overset{\eqref{HolderHessianFuncBound}}{\leq} &  
		F(y) +
		\frac{\Hfnu \|y - x\|^{2 + \nu}}{(1 + \nu)(2 + \nu)} +
		\frac{H r_H^{1 - \nu}(x) \|y - x\|^{2 + \nu}}{2 (2 + \nu) }
		\\[7pt]
		& \leq &
		 F(y) +  \frac{2\Hfnu \|y - x\|^{2 +
		\nu}}{(1 + \nu)(2 + \nu)},
		\ea
		$$
		where the first inequality follows from the fact, that
		$$
		\ba{rcl}
		T_H(x) & = & \argmin\limits_{y \in \dom F} \bigl\{ Q(x; y)
		+ \frac{H r_H^{1 - \nu}(x) \|y -
			x\|^{2 + \nu}}{2 (2 + \nu) } + h(y) \bigr\}.
		\ea
		$$
		Let us restrict $y$ to the segment: $y = \alpha x^{*} + (1 -
		\alpha) x,$ with $\alpha \in [0, 1]$. Taking into account the
		uniform convexity, we get:
		$$
		\ba{rcl}
		M_H^{*}(x) & \leq &  F(x) - \alpha \left( F(x) -
		F^{*} \right) + \alpha^{2 + \nu} \frac{2\Hfnu \|x^{*} - x\|^{2 +
				\nu}} {(1 + \nu)(2 + \nu)} \quad \\[5pt]
		& \overset{\eqref{UniformlyConvexLowerBound}}{\leq} &  
		F(x)
		-  \Bigl(\alpha  - \alpha^{2 + \nu} \frac{2\Hfnu}{  (1 + \nu)
			\sigmafnu}
		\Bigr) \left( F(x) - F^{*} \right).
		\ea
		$$
		The minimum of the right-hand side is attained at
		$
		\alpha^{*} = \min\bigl\{\frac{(1 + \nu)\gammafnu}{2(2 + \nu)}, 1 \bigr\}^{\frac{1}{1 + \nu}}.
		$
		Plugging this value into the bound, we have:
		\begin{displaymath}
		\ba{rcl}
		M^{*}_H(x) & \leq & F(x)  -   \min\bigl\{ \bigl(
		\frac{(1 + \nu)\gammafnu}{2(2 + \nu)}   \bigr)^{1 / (1 + \nu)}
		, \; 1 \bigr\} \cdot \frac{(1 + \nu)}{(2 + \nu)} \cdot \left(
		F(x) - F^{*} \right),
		\ea
		\end{displaymath}
		and this is the first argument of the minimum in
		\eqref{OneStepProgressHolder}.
		
		2) $H r_H^{1 - \nu}(x) > \frac{2 \Hfnu}{1 + \nu}.$
		By~\eqref{CubicGradNewPoint}, we have the bound:
		\begin{equation} \label{NextPointGradBound2}
		\ba{rcl}
		\| F'(T_H(x)) \|_{*} & \; < \; & Hr_H^2(x).
		\ea
		\end{equation}
		Using the fact that $\nabla^2 f(x) \succeq 0$, we get the second argument of the
		minimum:
	$$
	\ba{rcl}
	F(x) - M_H^{*}(x)
	& \overset{\eqref{CubicModelMinimumExpression}}{\geq} &
	\frac{Hr^3_H(x)}{3} \quad  \overset{ \eqref{NextPointGradBound2}
	}{\geq} \quad \frac{\| F'(T_H(x)) \|_{*}^{\frac32}}{3 \sqrt{H}}
	\\[5pt]
	& \overset{\eqref{UniformlyConvexGradientBound}}{\geq} &
	 \left(
	\frac{2 + \nu}{1 + \nu}  \right)^{\frac{3(1 + \nu)}{2(2 + \nu)}}
	\cdot \frac{(\sigmafnu)^{\frac{3}{2(2 + \nu)}}}{ 3 \sqrt{H} } \cdot
	\left( F(T_H(x)) - F^{*} \right)^{\frac{3(1 + \nu)}{2(2 + \nu)}}.
	\ea
	$$
\qed
\end{proof}

Denote by $\kappa_f(\nu)$ the following auxiliary value:
\begin{equation} \label{KappaDefinition}
\ba{rcl}
\kappa_f(\nu)
& \; \defeq \; & \frac{  \Hfnu^{\frac{2}{1 + \nu}}   }
{ (\sigmafnu)^{\frac{1 - \nu}{(1 + \nu)(2 + \nu)}} }
\cdot
\frac{6 \cdot (8 + \nu)^{\frac{1 - \nu}{1 + \nu}} }
{ \left((1 + \nu)(2 + \nu)\right)^{\frac{2}{1 + \nu}} }
\cdot
\bigl( \frac{1 + \nu}{2 + \nu} \bigr)^{\frac{1 - \nu}{2 + \nu}}, \;
\nu \in [0, 1].
\ea
\end{equation}
The next lemma shows what happens when parameter $H$ is increasing
during the iterations.

\begin{lemma} \label{LemmaUpHolder} Assume that for a fixed
	$x \in \dom F$ the parameter $H > 0$ is such that:
	\begin{equation} \label{LemmaUpHolderCondition}
	\ba{rcl}
	F(T_H(x)) & \; > \; & M^{*}_H(x).
	\ea
	\end{equation}
	If for some $\nu \in [0, 1]$, we have $\sigmafnu > 0$, then it holds:
	\begin{equation} \label{LemmaUpHolder3}
	\ba{rcl}
	H \left( F(T_{2H}(x)) - F^{*} \right)^{\frac{1 - \nu}{2 + \nu}}
	& \; < \; & \kappa_f(\nu).
	\ea
	\end{equation}
\end{lemma}
\begin{proof}
	Firstly, let us prove that from~\eqref{LemmaUpHolderCondition} we
	have:
	\begin{equation} \label{LemmaUpHolderBound1}
	\ba{rcl}
	H r^{1 - \nu}_H(x) & \; < \; & \frac{6 \Hfnu}{(1 + \nu)(2 + \nu)}.
	\ea
	\end{equation}
	Assuming by contradiction, $H r^{1 - \nu}_H(x) \; \geq \; \frac{6
		\Hfnu}{(1 + \nu)(2 + \nu)}$, we get:
	$$
	\ba{rcl}
	M_H^{*}(x)  & := & \frac{H \|T_H(x) - x\|^3}{6} + Q(x;
	T_H(x)) + h(T_H(x)) \\[5pt]
	& \geq & \frac{\Hfnu \|T_H(x) - x\|^{2 + \nu}}{(1 + \nu)(2 +
		\nu)} + Q(x; T_H(x)) + h(T_H(x)) \\[5pt]
	& \overset{\eqref{HolderHessianFuncBound}}{\geq} & F(T_H(x)),
	\ea
	$$
	which contradicts~\eqref{LemmaUpHolderCondition}.
	Secondly, by its definition, $M^*_H(x)$ is a concave function of $H$.
	Therefore, its derivative
	$
	{d \over d H} M^*_H(x)  =  {1 \over 6} r_H^3(x)
	$
	is non-increasing. Hence, it holds:
	\beq \label{LemmaUpHolderBound2}
	\ba{rcl}
	r_{2H}(x) & \leq & r_H(x)
	\overset{\eqref{LemmaUpHolderBound1}}{<} 
	\bigl( \frac{6 \Hfnu}{(1 + \nu)(2 + \nu) H} \bigr)^{\frac{1}{1 - \nu}}.
	\ea
	\eeq
	Finally, by the smoothness and the uniform convexity, we obtain:
	$$
	\ba{cl}
	& H \left( F(T_{2H}(x)) - F^{*} \right)^{\frac{1 - \nu}{2 + \nu}}
	\;\; \refLE{UniformlyConvexGradientBound} \;\;
	H \left(
	\frac{1 + \nu}{2 + \nu} \bigl( \frac{1}{\sigmafnu}
	\bigr)^{\frac{1}{1 + \nu}}  \right)^{\frac{1 - \nu}{2 + \nu}} 
	\| F'(T_{2H}(x)) \|_{*}^{\frac{1 - \nu}{1 + \nu}}  \\[5pt]
	& \quad \refLE{CubicGradNewPoint} 
	H \left(
	\frac{1 + \nu}{2 + \nu} \bigl( \frac{1}{\sigmafnu}
	\bigr)^{\frac{1}{1 + \nu}}  \right)^{\frac{1 - \nu}{2 + \nu}} 
	\Bigl(
	r^{1 +\nu}_{2H}(x) \cdot \left(  \frac{\Hfnu}{1 + \nu} + H r^{1 -
		\nu}_{2H}(x) \right)
	\Bigr)^{\frac{1 - \nu}{1 + \nu}} \\[5pt]
	& \quad \overset{\eqref{LemmaUpHolderBound2}}{<} 
	H \left(
	\frac{1 + \nu}{2 + \nu} \bigl( \frac{1}{\sigmafnu}
	\bigr)^{\frac{1}{1 + \nu}}  \right)^{\frac{1 - \nu}{2 + \nu}} 
	\Bigl( 
	r^{1+\nu}_{2H}(x) \cdot \frac{(8 + \nu) \Hfnu}{(1 + \nu)(2 + \nu)}
	\Bigr)^{\frac{1 - \nu}{1 + \nu}} \\[5pt]
	& \quad \overset{\eqref{LemmaUpHolderBound2}}{<} 
		\left(
	\frac{1 + \nu}{2 + \nu} \bigl( \frac{1}{\sigmafnu}
	\bigr)^{\frac{1}{1 + \nu}}  \right)^{\frac{1 - \nu}{2 + \nu}} 
	\left( \frac{\Hfnu}{(1 + \nu)(2 + \nu)}  \right)^{\frac{2}{1 + \nu}}
	6  (8 + \nu)^{\frac{1 - \nu}{1 + \nu}}
	\; =: \; \kappa_f(\nu).
	\ea
	$$
\qed
\end{proof}

We are ready to prove the main result of this paper.
\begin{theorem} \label{TheoremLinearRateUniformly} Assume that for a
	fixed $\nu \in [0, 1]$ we have $0 < \Hfnu < +\infty$ and
	$\sigmafnu > 0$. Let parameter $H_0$ in Algorithm 1 be
	small enough:
	\begin{equation} \label{HolderChoosingH}
	\ba{rcl}
	H_0 & \; \leq \; & \frac{ \kappa_f(\nu) }{ \left( F(x_0) - F^{*}
		\right)^{(1 - \nu) / (2 + \nu)  } },
	\ea
	\end{equation}
	where $\kappa_f(\nu)$ is defined
	by~\eqref{KappaDefinition}. Let the sequence $\{ x_k \}_{k
		= 0}^K$ generated by the method satisfy condition:
	\begin{equation} \label{HolderConditionFuncEps}
	F(T_{H_k2^{j}}(x_k)) - F^{*} \; \geq \; \varepsilon \; > \; 0, \qquad
	\quad 0 \leq j \leq i_k, \quad 0 \leq k \leq K - 1.
	\end{equation}
	Then, for every $0 \leq k \leq K - 1$, we have:
	\beq \label{HolderMainResult}
	\ba{cl}
	& F(x_{k + 1}) - F^{*}  \leq \\[5pt]
	& \bigl( 1 - \min\bigl\{  \frac{ (2 + \nu)\left( (1 + \nu) (2
		+ \nu) \right)^{ 1 / (1 + \nu) }  
	    \left( \gammafnu  \right)^{\frac{1}{1 +
	    		\nu}}   }{ (1 + \nu)6^{3/2} \cdot 2^{1/2} \cdot (8
		+ \nu)^{ (1 - \nu) / (2 + 2 \nu) } } , \frac{1}{2} \bigr\}  \bigr) \cdot 
	  \left( F(x_{k}) - F^{*} \right).
	\ea
	\eeq
	Therefore, the rate of convergence is linear, and
	\begin{displaymath}
	\ba{rcl}
	K & \; \leq \; & \max\bigl\{  \left( \gammafnu \right)^{\frac{-1}{1
			+ \nu}} \cdot \frac{1 + \nu}{2 + \nu}  \cdot \frac{6^{3/2} \cdot
		2^{1/2} \cdot (8 + \nu)^{(1 - \nu) / (2 + 2\nu)} }{ \left( (1 +
		\nu)(2 + \nu) \right)^{1 / (1 + \nu)}  } , \; 1 \bigr\} \cdot \log
	\frac{ F(x_0) - F^{*} }{\varepsilon}.
	\ea
	\end{displaymath}
	Moreover, we have the following bound for the total number of oracle
	calls $N_K$ during the first $K$ iterations:
	\begin{equation} \label{HolderOracleCallsBound}
	\ba{rcl}
	N_K & \; \leq \; & 2K  +  \log_2
	\frac{\kappa_f(\nu)}{\varepsilon^{(1 - \nu) / (2 + \nu)}} -
	\log_2 H_0.
	\ea
	\end{equation}
\end{theorem}
\begin{proof}
	The proof is based on Lemmas~\ref{LemmaStepHolder}
	and~\ref{LemmaUpHolder}, and monotonicity of the sequence
	$\bigl\{ F(x_k)  \bigr\}_{k \geq 0}$.
		Firstly, we need to show that every iteration of the method is
	well-defined. Namely, we are going to verify that for a fixed $0 \leq
	k \leq K-1$, there exists a finite integer $\ell \geq 0$ such that
	either $F(T_{H_k 2^{\ell}}(x_k) ) \leq M_{H_k 2^{\ell}}^{*}(x_k)$ or
	$F(T_{H_k 2^{\ell + 1}}(x_k)) - F^{*} < \varepsilon$.
		Indeed, let us set
	\begin{equation} \label{Th2HLowerBound}
	\ba{rcl}
	\ell := \max\left\{0,  \log_2 \left \lceil
	\frac{\kappa_f(\nu)}{H_k \varepsilon^{(1 - \nu) / (2 + \nu)} }
	\right\rceil  \right\}, 
	& \quad \text{and} \quad &
	H := H_k
	2^{\ell}  \geq  \frac{\kappa_f(\nu)}{\varepsilon^{(1 - \nu) / (2
			+ \nu)}}.
	\ea
	\end{equation}
	Then, if we have both $F(T_{H}(x_k)) > M_{H}^{*}(x_k)$ and
	$F(T_{2H}(x_k)) - F^{*}
	\geq \varepsilon$, we get by Lemma \ref{LemmaUpHolder}:
	\begin{displaymath}
	\ba{rcl}
	H & \; \overset{\eqref{LemmaUpHolder3}}{<} \; &
	\frac{\kappa_f(\nu)}{\left( F(T_{2H}(x_k)) - F^{*}  \right)^{(1 - \nu)
			/ (2 + \nu)}  } \; \leq \; \frac{\kappa_f(\nu)}{\varepsilon^{(1
			- \nu) / (2 + \nu)}},
	\ea
	\end{displaymath}
	which contradicts~\eqref{Th2HLowerBound}. Therefore, if we are unable
	to find the value $0 \leq i_k \leq \ell$ (see line~1 of
	Algorithm) in a finite number of steps, that only means we have
	already solved the problem up to accuracy $\varepsilon$.

	Now, let us show that for every $0 \leq k \leq K$ it holds:
	\begin{equation} \label{Th2UsefulBound}
	\ba{rcl}
	H_k \left( F(x_k) - F^{*} \right)^{\frac{1 - \nu}{2 + \nu}} 
	& \leq &
	\max \left\{ \kappa_f(\nu), \; H_0 \left( F(x_0) - F^{*}
	\right)^{\frac{1 - \nu}{2 + \nu}} \right\}.
	\ea
	\end{equation}
	This inequality is obviously valid for $k = 0$. Assume it
	is also valid for some $k \geq 0$. Then, by definition of
	$H_{k + 1}$ (see line~3 of Algorithm), we have $H_{k +
		1} = H_k 2^{i_k - 1}$. 
	There are two cases.
		1) $i_k = 0$. Then, $H_{k + 1} < H_k$. By monotonicity of
		$\bigl\{ F(x_k) \bigr\}_{k \geq 0}$ and by induction, we get:
		$$
		\ba{rcl}
		H_{k + 1} \left( F(x_{k + 1}) - F^{*} \right)^{\frac{1 - \nu}{2 +
				\nu}}  & \; < \; & H_k \left( F(x_k) - F^{*} \right)^{\frac{1 -
				\nu}{2 + \nu}} \\[5pt]
		& \; \leq \; & 
		\max \left\{ \kappa_f(\nu), \; H_0 \left( F(x_0) -
		F^{*} \right)^{\frac{1 - \nu}{2 + \nu}} \right\}.
		\ea
		$$
		
		2) $i_k > 0$. Then, applying Lemma~\ref{LemmaUpHolder} with $H
		:= H_k 2^{i_k - 1} = H_{k + 1}$ and $x :=
		x_{k}$, we have:
		\begin{displaymath}
		\ba{rcl}
		H_{k + 1} \left( F(x_{k + 1}) - F^{*} \right)^{\frac{1 - \nu}{2 +
				\nu}} & \; = \; & H \left( F(T_{2H}(x)) - F^{*}
		\right)^{\frac{1 - \nu}{2 + \nu}} \;
		\overset{\eqref{LemmaUpHolder3}}{\leq} \; \kappa_f(\nu).
		\ea
		\end{displaymath}
	Thus, \eqref{Th2UsefulBound} is true by induction.
	Choosing $H_0$ small enough~\eqref{HolderChoosingH}, we have:
	\begin{equation} \label{HolderHtGoodBound}
	\ba{rcl}
	2H_{k} \left( F(x_k) - F^{*}\right)^{\frac{1 - \nu}{2 + \nu}} 
	& \; \leq \; & 
	2\kappa_f(\nu), \qquad 0 \leq k \leq K.
	\ea
	\end{equation}
	From Lemma~\ref{LemmaStepHolder} we know, that one of the two
	following estimates is true (denote $\delta_k := F(x_k) - F^{*}$):

	\begin{enumerate}
		\item[1)] $F(x_k) - F(x_{k + 1})  \geq \alpha \cdot \delta_k
		 \;  \Leftrightarrow  \;  \delta_{k + 1}  \leq (1 -
		\alpha) \cdot \delta_k, \; $ \textit{or}
		
		\item[2)] $F(x_k) - F(x_{k + 1})  \geq \beta \cdot \delta_{k +
			1}  \; \Leftrightarrow \;  \delta_{k + 1}
	 	\leq (1 + \beta)^{-1} \delta_k  \leq  (1 - \min\{\beta,
		1\} / 2) \cdot \delta_k$,
	\end{enumerate}
	where
	$
	\alpha  
	 := 
	 \frac{1 + \nu}{2 + \nu} \cdot
	\min\bigl\{\bigl(  \frac{(1 + \nu) \gammafnu}{2(2 + \nu)} \bigr)^{
		\frac{1}{1 + \nu}}, \; 1\bigr\},
	$
	and
	$$
	\ba{rcl}
	\beta 
	 & := & 
	\bigl( \frac{2 + \nu}{1 + \nu}
	\bigr)^{\frac{3(1 + \nu)}{2(2 + \nu)}} \cdot
	\frac{(\sigmafnu)^{\frac{3}{2(2 + \nu)}}}{3 (2 \kappa_f(\nu))^{1/2}
	}
	\; \refEQ{KappaDefinition} \;
	\frac{2 + \nu}{1 + \nu} \cdot \frac{ 2^{1/2} \cdot
		\left( (1 + \nu) (2 + \nu)  \right)^{\frac{1}{1 + \nu}} }{6^{3/2}
		\cdot (8 + \nu)^{(1 - \nu) / (2 + 2\nu)}} \cdot \gammafnu^{\frac{1}{1
			+ \nu}}.
	\ea
	$$
	It remains to notice, that $\alpha \geq \min \bigl\{\beta, 1\bigr\} /
	2$. Thus, we obtain~\eqref{HolderMainResult}.
	
	Finally, let us estimate the total number of the oracle calls $N_K$
	during the first $K$ iterations. At each
	iteration, the oracle is called $i_k + 1$ times, and we have $H_{k +
		1} = H_k 2^{i_k - 1}$. Therefore,
	$$
	\ba{rcl}
	N_K & \; = \; & \sum_{k = 0}^{K - 1} (i_k + 1) \; = \; \sum_{k = 0}^{K -
		1} \left( \log_2 \frac{H_{k + 1}}{H_k} + 2\right) \\[5pt]
	& \; = \; & 2K + \log_2
	H_{K} - \log_2 H_0
	\; \overset{\eqref{HolderHtGoodBound},
		\eqref{HolderConditionFuncEps}}{\leq} \;
	2K + \log_2
	\frac{\kappa_f(\nu)}{\varepsilon^{(1 - \nu) / (2 + \nu)}} - \log_2
	H_0.
	\ea
	$$
\qed
\end{proof}

Note that condition~\eqref{HolderChoosingH} for the
initial choice of $H_0$ can be seen as a definition of the
moment, after which we can guarantee the linear rate of
convergence~\eqref{HolderMainResult}. In practice, we can
launch Algorithm~1 with \textit{arbitrary} $H_0 > 0$.
There are two possible options: either the method halves
$H_k$ at every step in the beginning, so $H_k$ becomes
small very quickly, or this value is increased at least
once, and the required bound is guaranteed by
Lemma~\ref{LemmaUpHolder}. It can be easily proved, that
this initial phase requires no more than $ K_0 =
\bigl\lceil \log_2 \frac{H_0 \varepsilon^{(1 - \nu) / (1 +
		\nu)}}{\kappa_f(\nu)} \bigr\rceil $ oracle calls.

%%%%%%%%%%%%%%%%%%%%%%%%%%%%%%%%%
%%%%%%%%%%%%%%%%%%%%%%%%%%%%%%%%%
%%%%%%%%%%%%%%%%%%%%%%%%%%%%%%%%%
\section{Discussion} \label{SectionDiscussion}
%%%%%%%%%%%%%%%%%%%%%%%%%%%%%%%%%
%%%%%%%%%%%%%%%%%%%%%%%%%%%%%%%%%
%%%%%%%%%%%%%%%%%%%%%%%%%%%%%%%%%

Let us discuss the global complexity results, provided by
Theorem~\ref{TheoremLinearRateUniformly} for the Cubic
Regularization of the Newton method with the adaptive
adjustment of the regularization parameter.

For the class of twice continuously differentiable
strongly convex functions with Lipschitz continuous
gradients $f \in \S_{\mu, L}^{2, 1}(\dom F)$, it is well
known that the classical gradient descent method needs
\begin{equation} \label{OLMuComplexity}
\ba{c}
O\bigl( \frac{L}{\mu} \log \frac{F(x_0) -
	F^{*}}{\varepsilon} \bigr)
\ea
\end{equation}
iterations for computing $\varepsilon$-solution of the
problem (e.g., \cite{nesterov2018lectures}). As it was
shown in \cite{cartis2012evaluation}, this result is
shared by a variant of Cubic Regularization of the Newton
method. This is much better than the
bound 
$
O\bigl( \bigl(\frac{L}{\mu}\bigr)^2
\log\frac{F(x_0) - F^{*}}{\varepsilon}\bigr),
$
known
for the damped Newton method~(e.g.,  \cite{boyd2004convex}).

For the class of uniformly convex functions of degree $p = 2 + \nu$
having H\"older continuous Hessian of degree $\nu \in [0, 1]$, we have
proved the following parametric estimates:
$
O\bigl( \max\bigl\{ \bigl(\gammafnu\bigr)^{\frac{-1}{1 + \nu}},
1\bigr\} \cdot \log \frac{F(x_0) - F^{*}}{\varepsilon} \bigr),
$
where $\gammafnu \defeq
\frac{\sigmafnu}{\Hfnu}$ is the \textit{condition number}
of degree $\nu$. However, in practice we may not know
exactly an appropriate value of the parameter $\nu$. It is
important that our algorithm automatically adjusts to the
best possible complexity bound:
\begin{equation} \label{OGammaComplexity}
\ba{c}
O\bigl( \max\bigl\{ \; \inf_{\nu \in [0, 1]}
\bigl(\gammafnu\bigr)^{\frac{-1}{1 + \nu}}, \; 1 \;\bigr\}
\cdot \log \frac{F(x_0) - F^{*}}{\varepsilon} \bigr).
\ea
\end{equation}
Note that for $f \in S_{\mu, L}^{2, 1}(\dom F)$ we have:
\begin{displaymath}
\ba{rcl}
\| \nabla^2 f(x) - \nabla^2 f(y) \| & \; \leq \; & L -
\mu, \qquad x, y \in \dom F.
\ea
\end{displaymath}
Thus, $\Hf(0) \leq L-\mu$ and $\gamma_f(0) \geq \frac{\mu}{L - \mu}$.
So we can conclude that the estimate~\eqref{OGammaComplexity}
is better than~\eqref{OLMuComplexity}.
Moreover, addition to our objective \textit{arbitrary} convex
quadratic function does not change any of $\Hfnu, \, \nu \in [0, 1]$.
Thus it can only improve the condition number $\gammafnu$, while the
ratio $L / \mu$ may become arbitrarily bad. It confirms an intuition
that a natural Newton-type minimization scheme should not be affected
by any quadratic parts of the objective, and the notion of
\textit{well-conditioned} and \textit{ill-conditioned} problems
for second-order methods should be different from that of for
first-order ones.

Note that in the recent paper~\cite{grapiglia2019accelerated},
a linear rate of convergence was also proven for the accelerated
second-order scheme, with the complexity bound:
\beq \label{OAccBound}
\ba{c}
O\bigl( \max\{ (\gammafnu)^{\frac{-1}{2 + \nu}}, 1\} 
\cdot \log \frac{\Hfnu D_0^{2 + \nu}}{\varepsilon}  \bigr).
\ea
\eeq
This is the better rate  than~\eqref{OGammaComplexity}.
However, the method requires to know the parameter $\nu$,
and the constant of uniform convexity. Thus, one
theoretical question remains open:
is it possible to construct \textit{universal}
second-order scheme, matching~\eqref{OAccBound}
in the uniformly convex case.

Looking at the definitions of $\Hfnu$ and $\sigmafnu$, we can see that,
for all $x, y \in \dom F, x \not= y$,
\begin{displaymath}
\ba{rcccl}
\sigmafnu & \; \leq \; &
\frac{\langle \nabla f(x) - \nabla f(y), x - y\rangle}
{\|x - y\|^{2 + \nu}}, \quad \frac{1}{\Hfnu}
& \; \leq \; &
\frac{\|x - y\|^{\nu}}{\| \nabla^2 f(x) - \nabla^2 f(y) \|},
\ea
\end{displaymath}
and
\begin{displaymath}
\ba{rcl}
\gammafnu & \; := \; & \frac{\sigmafnu}{\Hfnu}
 \; \leq \; 
\frac{\langle \nabla f(x) - \nabla f(y), x - y \rangle}
{\| \nabla^2 f(x) - \nabla^2 f(y)\| \cdot \|x - y\|^2}.
\ea
\end{displaymath}
The last fraction does not depend on any particular $\nu$.
So, for any twice-differentiable convex function, we can
define the following number:
\begin{displaymath}
\ba{rcl}
\gamma_f & \defeq & \inf\limits_{\substack{ x, y \in \dom F \\
		x \not= y }} \frac{\langle \nabla f(x) - \nabla f(y), x - y \rangle }
{\| \nabla^2 f(x) - \nabla^2 f(y) \| \cdot \|x - y\|^2}.
\ea
\end{displaymath}
If it is positive, then it could serve as an indicator of the
\textit{second-order non-degeneracy}, for which we
have a lower bound:
$
\gamma_f \geq  \gammafnu, \; \nu \in [0, 1].
$

\section{Conclusions}

In this work, we have introduced the second-order condition
number of a certain degree, which plays as the main complexity
factor for solving uniformly convex minimization problems
with H\"older-continuous Hessian of the objective by second-order 
optimization schemes.

We have proved that cubically regularized Newton method with
an adaptive estimation of the regularization parameter
achieves global linear rate of convergence on this class of functions.
The algorithm does not require to know any parameters of 
the problem class and 
automatically fits to the best possible degree of nondegeneracy. 

Using this technique, we have justified that 
global iteration complexity 
of Cubic Newton is always better
than corresponding one of the gradient method
for the standard class of strongly convex functions 
with uniformly bounded second derivative.

Data sharing not applicable to this article as no datasets were generated or analysed during the current study.

\begin{acknowledgements}
The research results of this paper were
obtained with support of ERC Advanced Grant 788368.
\end{acknowledgements}

%References
% BibTeX users  please use  \bibliographystyle{spmpsci_unsrt}. The option spmpsci_unsrt prints the references in JOTA format  in the order they are cited.  
%Otherwise, please use the following:

%\begin{thebibliography}{}

% References must be listed in the order in which they actually appear in the text, not alphabetically. 

% Format for journal articles
%\bibitem{1.} Huang, H.Y.: Unified approach to quadratically convergent algorithms for function minimization. J. Optim. Theory Appl. 5, 354-405 (2009)

% Format for reports
%\bibitem{2.} Yang, T. L.: Optimal control for a rocket in a three-dimensional central force field. Technical Memorandum TM 69-2011-2, Bellcomm (1969)      
                       
%Format for books
%\bibitem{3.}Nocedal, J., Wright, S.J.: Numerical Optimization, Springer, New York  (2000)

%Format for edited books
%\bibitem{4.} Miele, A. (ed.): Theory of Optimum Aerodynamic Shapes. Academic Press, New York (1965)

%Format for articles in edited books
%\bibitem{5.} Ralston, A.: Numerical integration methods for the solution of ordinary differential equations. In: Ralston, A., Wilf, H.S. (eds.): Mathematical Methods for Digital Computers, vol. 1, pp. 95Ð109. Wiley, New York (1960)

%\end{thebibliography}

%\bibliographystyle{spmpsci_unsrt}
%\bibliography{bibliography}

\end{document}